\documentclass[twocolumn]{autart}    

\usepackage{graphicx}          

\usepackage{amssymb}
\usepackage{amsmath}
\usepackage{xcolor}

\newtheorem{proposition}{Proposition}
\newtheorem{theorem}{Theorem}
\newtheorem{corollary}{Corollary}
\newtheorem{assumption}{Assumption}
\newtheorem{definition}{Definition}

\usepackage[authoryear,longnamesfirst]{natbib}

\begin{document}
\begin{frontmatter}

\title{A necessary and sufficient condition for discrete-time consensus on star boundaries  \thanksref{footnoteinfo}}

\thanks[footnoteinfo]{This paper was not presented at any IFAC 
meeting. Corresponding author is Johan Thunberg.}

\author[Lund]{Galina Sidorenko}\ead{galina.sidorenko@eit.lth.se},               
\author[Lund]{Johan Thunberg}\ead{johan.thunberg@eit.lth.se}    

\address[Lund]{Department of Electrical and Information Technology, Faculty of Engineering, Lund University}  

\begin{keyword}                           
consensus; multi-agent systems; discrete-time systems; convergence.              
\end{keyword}          
\begin{abstract}     
It is intuitive and well known, that if agents in a multi-agent system iteratively update their states in the Euclidean space as convex combinations of neighbors’ states, all states eventually converge to the same value (consensus), provided the interaction graph is sufficiently connected. However, this seems to be also true in practice if the convex combinations of states are mapped or radially projected onto any unit $l_p$-sphere or even boundaries of star-convex sets, herein referred to as star boundaries. In this paper, we present insight into this matter by providing a necessary and sufficient condition for asymptotic consensus of the normalized states (directions) for strongly connected directed graphs, which is equivalent to asymptotic consensus of states when the star boundaries are the same for all agents. Furthermore, we show that when asymptotic consensus occurs, the states converge linearly and the point of convergence is continuous in the initial states. Assuming a directed strongly connected graph provides a more general setting than that considered, for example, in gradient-based consensus protocols, where symmetric graphs are assumed. Illustrative examples and a vast number of numerical simulations showcase the theoretical results. 
\end{abstract}

\end{frontmatter}

\section{Introduction}
A key problem in multi-agent systems is the consensus problem, where agents must agree on a common decision with often limited knowledge of a global coordinate system~\citep{olfati2004consensus,olfati2007consensus,ren2005consensus,ren2005survey,garin2010survey}. 
The nature of this problem depends heavily on the context; whether it involves achieving agreement on numerical values~\citep{schenato2011average,savazzi2020federated}, aligning physical states~\citep{thunberg2016consensus,olfati2006flocking}, or coordinating behaviors~\citep{cao2013overview,ren2005survey,reynolds1987flocks,vicsek1995novel}.

In the paper, we study discrete-time consensus on a large class of boundaries of star-convex sets, or \textit{star boundaries}. These are shapes homeomorphic to the unit $2$-sphere in $d$-dimensional Euclidean space. This, of course, includes the \( l_p \)-spheres as special cases. When $p = 2$, the problem comprises consensus on the $d$-dimensional unit sphere~\citep{dorfler2014synchronization,scardovi2007synchronization,sarlette2009synchronization,thunberg2016consensus,markdahl2017almost}. Consensus for unit vectors (w.r.t. Euclidean norm) is a special case of consensus for orthogonal matrices~\citep{tron2012intrinsic,markdahl2020high,thunberg2016consensus,thunberg2018dynamic}, and more generally Riemannian  manifolds~\citep{tron2012riemannian,shah2017distributed}.

However, some important questions remain unanswered for the unit sphere and these more general surfaces considered. Similar to nonlinear consensus for multi-agent systems evolving in Euclidean space~\citep{moreau2004stability,lin2005state,wu2024hilbert,thunberg2017local}, consensus seemingly (at least almost) always is achieved globally when projected gradient descent-like algorithms are used, even when the interaction graph of the multi-agent system is not symmetric. We take a step toward understanding this phenomenon by introducing a necessary and sufficient condition for consensus, which provides more intuition behind this behavior. We show that a continuous function of the initial state must be equal to zero for consensus not to occur, implying that the set of initial conditions for which consensus is not achieved is a closed set.

We consider interactions between the agents in the system described by a directed strongly connected graph, where the edge-weights of the graph appear as coefficients in the consensus algorithm/protocol. Graph-based methods have been extensively used to describe and analyze distributed multi-agent coordination~\citep{mesbahi2010graph}.  If the graph is connected and symmetric, and the Euclidean unit sphere is considered, the algorithm in this paper reduces to projected gradient ascent (or descent, depending on how we choose to formulate the objective). In this context, it falls under the umbrella of first order methods for optimization on manifolds~\citep{diaconis1991geometric,absil2012projection,boumal2019global,balashov2021gradient,karimi2016linear,scutari2019distributed,yuan2016convergence}. In such contexts, restrictive assumptions are often imposed, such as requiring the adjacency matrix or mixing matrix to be doubly stochastic~\citep{shi2015extra,nedic2017achieving}. 

Recently, many new results on contraction theory and generalizations thereof have been published for dynamic systems that share similarity with the ones considered here. These include introduction and results on weak and semi-contraction theory~\citep{jafarpour2021weak,de2023dual}, contraction on polyhedral cones~\citep{jafarpour2024monotonicity} and coupled oscillators~\citep{jafarpour2021weak}, and consensus~\citep{deplano2023novel}. Our update equation for the states in the consensus algorithm comprises a radial projection of conical combinations of neighboring states onto star boundaries. This operation is not required to be differentiable, and the state-domain is neither convex nor geodesically convex. This prevents us from using classic contraction results and more recent results on contraction analysis of differentially positive discrete-time deterministic systems~\citep{kawano2025contraction}. 
Similarly to many of these works, we analyze a system that evolves in a positive cone. This can be seen as a tool to show our main results for the original system. However, contraction theory is not used to derive the main contribution.

The main contribution of this paper is a necessary and sufficient condition for asymptotic consensus of directions, meaning that the normalized state vectors asymptotically converge to the same direction. Furthermore, we show that whenever asymptotic consensus of directions occurs, the normalized states converge linearly. We provide some immediate assumptions on the initial states under which the  condition is satisfied. The region of initial states for which asymptotic consensus of directions occurs is an open set, and the consensus point is locally continuous in the initial state. Finally, extensive numerical simulations and inspection of the necessary and sufficient condition seem to suggest a conjecture that asymptotic consensus of directions occurs for all initial states but a set of measure zero.

The rest of the paper is organized as follows. After the preliminaries given in Section \ref{sec:preliminaries}, we introduce the algorithm in Section \ref{sec:algorithm}. Section \ref{sec:convergence_matrices} presents theoretical results on the convergence of a class of time-varying matrices, which are leveraged to show the main result in Section \ref{sec:convergence_to_consensus}. The theoretical results are supported by the illustrative examples presented in Section \ref{sec:simulations}, derived from numerous simulations conducted with random system parameters, in which asymptotic consensus of directions was observed in every simulation.
The paper concludes with a brief summary in Section \ref{sec:conclusions}.

\section{Preliminary notation and definitions} \label{sec:preliminaries}
\subsection{Vectors, matrices and graphs}
Throughout, the convention is to use row-vectors instead of column-vectors. Thus, a vector $x$ denotes a row-vector, while $x^T$ represents a column-vector. When we write $x \in \mathbb{R}^{d}$, this is equivalent to $x \in \mathbb{R}^{1\times d}$. Thus, $x^T \in \mathbb{R}^{d \times 1}$ is a column-vector with $d$ elements. We denote the $(n \times m)$-dimensional matrix of ones and the $(n \times m)$-dimensional matrix of zeros as $\textbf{1}_{n,m}$ and $\textbf{0}_{n,m}$, respectively. The identity matrix of size $(n \times n)$ is denoted by $I_n$. For a vector $x$, $[x]_i$ is the $i$-th element of the vector. 
For a matrix $A \in \mathbb{R}^{n \times d}$, we use the notation $[A]_i$ for the $i$-th row and $[A]_{ij}$ for the element in the $i$-th row and $j$-th column. To simplify notation, we sometimes introduce a symbol $a_{ij}$ for each $[A]_{ij}$ and then write $A = [a_{ij}]$.  

A directed graph with nodes $\mathcal{V} = \{1, 2, \ldots, n\}$ has notation $\mathcal{G}(n) = (\mathcal{V}(n), \mathcal{E})$ where $\mathcal{E} \subset (\mathcal{V}(n) \times \mathcal{V}(n))$ is the set of directed edges. 

We say a matrix $A = [a_{ij}] \in \mathbb{R}^{n \times n}$ is a \textit{nonnegative weight matrix} for the directed graph $\mathcal{G}(n) = (\mathcal{V}(n), \mathcal{E})$ if $a_{ij} > 0$ for $(i,j) \in \mathcal{E}$ and $a_{ij} = 0$ for $(i,j) \notin \mathcal{E}$. 

We define $A$ as a \textit{diagonally dominant} nonnegative weight matrix for the graph $\mathcal{G}(n) = (\mathcal{V}(n), \mathcal{E})$ if $A$ is a nonnegative weight matrix which satisfies the condition 
\begin{align}
    [A]_{ii} > \sum_{j \neq i }[A]_{ij}, \forall  i \in \{1,2,\dots, n\}.
\end{align}
A matrix $D  \in \mathbb{R}^{n \times n}$ is called a \textit{positive diagonal} matrix if all elements on the main diagonal are positive real numbers, and all off-diagonal elements are zero. Any positive diagonal matrix $D$ can be defined by a positive vector $(d_1, \dots d_n)$, $d_i>0$. In this case, we write $D=\text{diag}\left (d_1, \dots d_n \right )$. 

Let $B\in \mathbb{R}^{n \times n}$ be a nonnegative matrix. We define the \textit{minimum cone} as the set comprising all conical sums of the rows of $B$:
\begin{align}
\mathcal{C}(B) = \left\{ \sum_{i=1}^{n} \lambda_i [B]_i : \lambda_i \geq 0, \, \forall i \in \{1, 2, \dots, n\} \right\}.
\end{align}
This is the smallest cone that contains all the rows of matrix $B$. This cone is invariant under nonnegative matrix multiplication. Specifically, for any nonnegative matrix $W \in \mathbb{R}^{n \times n}$, it holds that $\mathcal{C}(WB) \subseteq \mathcal{C}(B)$.

For a matrix $A\in \mathbb{R}^{n \times d}$ with no zero-rows, we define the following function: 
\begin{align} \label{eq:phi_max}
\phi_{\max}(A) = \max_{1 \leq i, j \leq n} \arccos \left( \frac{[A]_i [A]_j^\top}{\|[A]_i\|_2 \|[A]_j\|_2} \right),
\end{align}
which is the largest angle between any two normalized rows of the matrix $A$. 

It holds that 
\begin{align} \label{eq:cos_min}
&\cos(\phi_{\max}(A)) = \min_{1 \leq i, j \leq n}   \frac{[A]_i [A]_j^\top}{\|[A]_i\|_2 \|[A]_j\|_2}.
\end{align}

\subsection{Star boundaries and radial projections}~\label{subsection:star}
The unit sphere in $\mathbb{R}^d$ is the set
\begin{align}
\mathbb{S}^{d-1} = \left\{ x \in \mathbb{R}^d : \|x\|_2 = 1 \right\}.
\end{align}
We define a \textit{directional function} on $\mathbb{S}^{d-1}$ as a continuous function $\gamma: \mathbb{S}^{d-1} \rightarrow \mathbb{R}^+$ that is bounded by a positive constant from above and bounded by a positive constant from below. 
For such a directional function $\gamma$, we define 
\begin{align}
  \mathbb{S}^{d-1}_{\gamma} 
  = \{ 
  y \in \mathbb{R}^d: 
    \exists x \in \mathbb{S}^{d-1} \text{ s.t. } y = \gamma(x)x\}.&
\end{align}
The set $\mathbb{S}^{d-1}_{\gamma}$ is the boundary of a star-convex set at the origin or a \textit{star boundary} of a \textit{star set}. A set $\mathcal{S} \subset \mathbb{R}^d$ is a star set at the origin if $x \in \mathcal{S}$ implies $\alpha x \in \mathcal{S}$ for $0 \leq \alpha \leq 1$. 
A sphere with respect to the $l_p$-norm (or simply the $p$-norm) with radius $r$ is a special case of a star boundary,  
where the directional function is $\gamma(x)=~\frac{r}{\|x\|_p}$ for $x\in ~\mathbb{S}^{d-1}$. For the special case of the unit $l_p$-sphere, we introduce the notation $\gamma_{l_p}$ for the directional function. 

Now, for $n$ directional functions represented as $\bar \gamma = (\gamma_1, \gamma_2, \ldots, \gamma_n)$ we define 
\begin{align}
\mathbb{S}(n,d,\bar \gamma) = \{X \in \mathbb{R}^{n \times d} : [X]_i \in \mathbb{S}^{d-1}_{\gamma_i}, ~\forall  i \in \{1,\dots, n\}\}.
\end{align}
The $i$'th row of a matrix in $\mathbb{S}(n,d,\bar \gamma)$ is an element of $\mathbb{S}^{d-1}_{\gamma_i}$. The set of matrices in $\mathbb{R}^{n \times d}$ for which each row has unit length w.r.t the Euclidean norm is $\mathbb{S}(n,d,\text{id})$, where $\text{id}$ is the identity map. We introduce $\mathbb{S}(n,d)$ as shorthand notation for this set. For $p\in\mathcal{N}\cup \infty$, where $\mathcal{N}=\{1,2,\dots\}$, we define $\bar \gamma_{l_p} = (\gamma_{l_p}, \gamma_{l_p}, \ldots, \gamma_{l_p})$. 
With this notation, the set of row-stochastic matrices is $\mathbb{S}(n,d,\bar \gamma_{l_1})$.

For a directional function $\gamma$, we define the \textit{radial projection} from 
$\mathbb{R}^d\backslash\{0\}$ to $ \mathbb{S}^{d-1}_{\gamma}$: 
\begin{equation}\label{eq:projection1}
P_{\mathbb{S}^{d-1}_{\gamma}}(x) = \gamma\left(\frac{x}{\|x\|_2}\right) \frac{x}{\|x\|_2}.
\end{equation}
An illustration of this projection for several sets 
is given in Fig.~\ref{fig:projection}.

\begin{figure}[h!]
    \centering
\includegraphics[width=0.65\columnwidth]{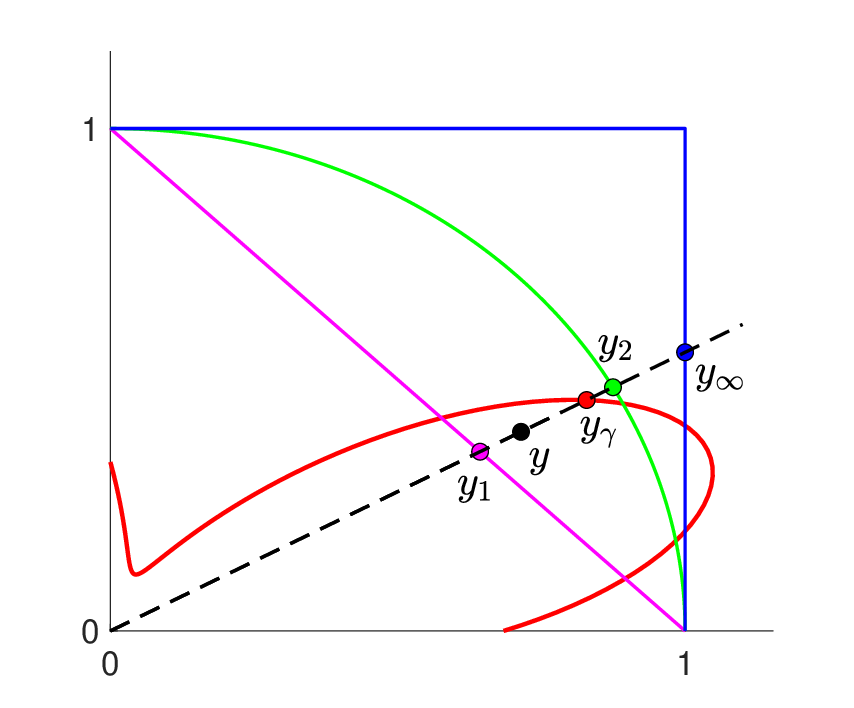} 
    \caption{Projection of a point $y$ onto four different star boundaries according to \eqref{eq:projection1}, where $y_1$, $y_2$, $y_{\infty}$, and $y_{\gamma}$ are the projections onto the unit $l_1$-sphere (magenta), the unit $l_2$-sphere (green), the unit $l_{\infty}$-sphere (blue), and a more complicated star boundary (red), respectively. Here, $d=2$.}
    \label{fig:projection}
\end{figure}

For a matrix $X \in \mathbb{R}^{n \times d} $ where each row is in $\mathbb{R}^d\setminus\!\{0\}$, 
we define the row-wise radial projection $P_{\mathbb{S}(n,d,\bar \gamma)}$ onto $\mathbb{S}(n,d,\bar \gamma)$ as the matrix in $\mathbb{R}^{n \times d}$, where 
\begin{align}\label{eq:projection2}
 [P_{\mathbb{S}(n,d,\bar \gamma)}(X)]_i=P_{\mathbb{S}_{\gamma_i}^{d-1}}([X]_i), ~\forall i\!\in \! \{1,2,\dots,n\}.   
\end{align}
Thus, $P_{\mathbb{S}(n,d)}$ normalizes all row vectors w.r.t. the Euclidean norm, whereas $P_{\mathbb{S}(n,d,\bar \gamma_{l_1})}$ normalizes all row vectors w.r.t. the $l_1$-norm.

\section{Algorithm and problem formulation} \label{sec:algorithm}

We consider a multi-agent system with $n$ agents, each of which is assigned a unique index in the set $\{1, 2, \dots, n\}$. Each agent $i$ has a state $x_i(t)$ on a star boundary ${\mathbb{S}_{\gamma_i}^{d-1}}$ that evolves over discrete time steps $t \geq t_0$, where $t-t_0 \in \mathcal{N}_0 = \{0, 1, 2, \dots\}$. To simplify notation, we often omit expressing the explicit time dependence and write $x_i$ instead of $x_i(t)$. Let $\bar \gamma = (\gamma_1, \gamma_2, \ldots, \gamma_n)$  and let $X \in \mathbb{S}(n,d,\bar \gamma)$ be the matrix whose $i$-th row is $x_i$, where $i \in \{1, 2, \dots, n\}.$ The vector $x_i(t_0)$ is referred to as the initial state of agent $i$ and $X(t_0) = X_0$ is referred to as the initial state of the system.

\subsection{Algorithm}
Let $A = [a_{ij}] \in \mathbb{R}^{n \times n}$ be a nonnegative  weight matrix for a directed graph $\mathcal{G}(n)$, which represents interaction between agents. 
The $x_i$-states (equivalently $X$) are updated according to the algorithm 
\begin{align}
\label{eq:main_Algo_general}
X(t+1) & ~ = P_{\mathbb{S}(n,d,\bar \gamma)}(AX(t)),
\end{align}
where $t\geq t_0 \geq 0$.

The algorithm \eqref{eq:main_Algo_general} may also be written in a less compact form where the update of the state for each agent is clearly visible:   
\begin{align}\label{eq:main_Algo_xi}
x_i(t+1)  &~ = P_{\mathbb{S}_{\gamma_i}^{d-1}}([AX(t)]_i),
\end{align}
where $[AX(t)]_i=[A]_iX(t)=\sum_{j = 1}^n a_{ij}x_j(t)$ is the $i$-th row of $AX(t)$. In this formulation, each agent's state is updated based on a conical
combination of its neighbors' states, followed by a radial projection onto the agent-specific star boundary defined by the directional function $\gamma_i$.
Note, that every conical combination of states has an equivalent convex combination w.r.t. projection since $\frac{\alpha_i[A]_iX}{\|\alpha_i[A]_iX\|_2}=\frac{[A]_iX}{\|[A]_iX\|_2}$, where $\alpha_i=\frac{1}{[A]_i\bold{1}_{n,1}}>0$.

By using the definition of the radial projection in \eqref{eq:projection1},  we express the equation in \eqref{eq:main_Algo_xi} for $i \in \{1,2,\ldots, n\}$ as
\begin{equation}\label{eq:kalle1}
x_i(t+1) = \gamma\left(\frac{[AX(t)]_i}{\|[AX(t)]_i\|_2}\right) \frac{[AX(t)]_i}{\|[AX(t)]_i\|_2}.
\end{equation}
We see that it is required that $[AX(t)]_i \neq \bold{0}_{1,d}$. Lemma~\ref{lemma:well-defined} below shows that if $A \in \mathbb{R}^{n \times n}$ is a nonnegative matrix with diagonal elements being sufficiently large in comparison to the off-diagonal elements, then for any $X \in \mathbb{S}(n,d,\bar \gamma)$, we have $[AX]_i \neq \bold{0}_{1,d}$. So, if the condition on $A$ in Lemma~\ref{lemma:well-defined} is satisfied, no matter what $X(t_0) \in \mathbb{S}(n,d,\bar \gamma)$ we choose, there is no $t \geq t_0$ and $i \in \{1,2, \ldots, n\}$ such that $[AX(t)]_i = \bold{0}_{1,d}$.

\begin{lem}\label{lemma:well-defined}
Let $\bar \gamma = (\gamma_1, \gamma_2, \ldots, \gamma_n)$, where $\gamma_i$ is a directional function on $\mathbb{S}^{d-1}$ for each $i$. If the nonnegative matrix $A = [a_{ij}] \in \mathbb{R}^{n \times n}$ satisfies
\begin{align}
\label{eq:lem1}
    a_{ii}\!\min_{{x} \in \mathbb{S}^{d-1}}\! \gamma_i (x)> \sum_{j \neq i}a_{ij} \max_{{x} \in \mathbb{S}^{d-1}} \!\gamma_j (x)  \; \;\forall i\!\in \! \{1,2,\dots,n\},
\end{align}
then $[AX]_i \neq \bold{0}_{1,d}$ for $i \in \{1,2,\dots,n\}$, $X \in \mathbb{S}(n,d,\bar \gamma)$.
\end{lem}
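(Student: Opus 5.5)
The argument is a direct application of the reverse triangle inequality. Fix $X \in \mathbb{S}(n,d,\bar \gamma)$ and an index $i$. Write the $i$-th row of $AX$ as
\[
[AX]_i = a_{ii} x_i + \sum_{j \neq i} a_{ij} x_j .
\]
We bound its Euclidean norm from below. Since $\|u+v\|_2 \geq \|u\|_2 - \|v\|_2$, and then by the triangle inequality together with $a_{ij}\ge 0$,
\begin{align*}
\|[AX]_i\|_2 &\geq a_{ii}\|x_i\|_2 - \Big\| \sum_{j\neq i} a_{ij} x_j \Big\|_2 \\
&\geq a_{ii}\|x_i\|_2 - \sum_{j\neq i} a_{ij}\|x_j\|_2 .
\end{align*}

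Next we turn the norms of the rows into values of the directional functions. Each $x_j \in \mathbb{S}^{d-1}_{\gamma_j}$ has the form $x_j = \gamma_j(z_j) z_j$ for some $z_j \in \mathbb{S}^{d-1}$. Because $\gamma_j>0$, this gives $\|x_j\|_2 = \gamma_j(z_j)$.

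The extrema in \eqref{eq:lem1} exist, since each $\gamma_j$ is continuous on the compact set $\mathbb{S}^{d-1}$. This yields the bounds
\[
\|x_i\|_2 \geq \min_{x\in\mathbb{S}^{d-1}}\gamma_i(x), \qquad \|x_j\|_2 \leq \max_{x\in\mathbb{S}^{d-1}}\gamma_j(x).
\]
Both bounds enter with the correct sign, because $a_{ii}\geq 0$ and $a_{ij}\geq 0$.

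Substituting these bounds into the previous display gives
\[
\|[AX]_i\|_2 \;\geq\; a_{ii}\min_{x}\gamma_i(x) - \sum_{j\neq i} a_{ij}\max_x \gamma_j(x) \;>\; 0,
\]
where the strict inequality is exactly \eqref{eq:lem1}. Hence $[AX]_i \neq \mathbf{0}_{1,d}$. Since $i$ and $X$ were arbitrary, the lemma follows.

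The only points requiring care are that the minimum and maximum are attained, which follows from continuity and compactness, and that $\|x_j\|_2 = \gamma_j(z_j)$ holds because $\gamma_j$ is positive.
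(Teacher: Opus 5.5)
Your proof is correct and is essentially the paper's argument. The paper argues by contradiction: it assumes $[AX]_{\tilde i}=\mathbf{0}_{1,d}$ and uses the same triangle inequality, the identity $\|x_j\|_2=\gamma_j(\cdot)$ and the min/max bounds to reach $a_{\tilde i\tilde i}\min\gamma_{\tilde i}\le\sum_{j\neq\tilde i}a_{\tilde ij}\max\gamma_j$, contradicting \eqref{eq:lem1}; your direct version reaches the same conclusion and additionally yields an explicit positive lower bound on $\|[AX]_i\|_2$ that is uniform in $X$, and it writes the normalized argument $z_j$ of $\gamma_j$ more carefully than the paper's $\gamma_i(x_i)$.
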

\textbf{Proof:} 
Let $X \in \mathbb{S}(n,d,\bar \gamma)$ and $x_i = [X]_i \in \mathbb{S}_{\gamma_i}^{d-1}$ for each $i$. Assume $\tilde i$ is such that $[AX]_{\tilde i} = a_{\tilde i \tilde i}x_{\tilde i}+\sum_{j \neq \bar \tilde i}a_{\tilde i j}x_j = \bold 0_{1,d}$. Then 
$a_{\tilde i \tilde i}\|x_{\tilde i}\|_2=\|\sum_{j \neq \tilde i}a_{\tilde i j}x_j\|_2 \leq  \sum_{j \neq \tilde i}a_{\tilde i j}\|x_j\|_2$. However, $\|x_i\|_2 = \gamma_i(x_i)$ for each $i$. Hence, $a_{\tilde i \tilde i}\gamma_{\tilde i} (x_{\tilde i}) \leq \sum_{j \neq \tilde i}a_{\tilde i j}\gamma_j (x_j)$, which implies 
\begin{equation}\label{eq:kalle2}
a_{\tilde i \tilde i}\min\limits_{{x} \in \mathbb{S}^{d-1}} \gamma_{\tilde i} (x) \leq \sum_{j \neq \tilde i}a_{\tilde i j} \max\limits_{{x} \in \mathbb{S}^{d-1}} \gamma_j (x).
\end{equation}
The inequality in \eqref{eq:kalle2} cannot be satisfied if the strict inequality in \eqref{eq:lem1} is satisfied. \hfill $\qed$

Note that in Lemma~\ref{lemma:well-defined} the requirement on $A$ is that it is nonnegative; we do not require it to be, for example, a nonnegative weight matrix for a strongly connected graph.  
If we assume $A$ to be a nonnegative weight matrix for the directed graph $\mathcal{G}(n) = (\mathcal{V}(n), \mathcal{E})$ that satisfies \eqref{eq:lem1}, this means that $(i,i) \in \mathcal{E}$ for all $i$, i.e., all self-loops are contained in the directed graph. If all $n$ agents are constrained to move on the same Euclidean sphere, i.e., all the $\gamma_i$'s are equal and constant, the condition in \eqref{eq:lem1} reduces to $a_{ii} > \sum_{j \neq i}a_{ij}$, i.e., the nonnegative weight matrix $A$ should be diagonally dominant. 

\subsection{Problem formulation}\label{sec:problem}
Now, the overall problem is to determine conditions under which asymptotic consensus is achieved for the algorithm \eqref{eq:main_Algo_general}, when $A$ is a nonnegative weight matrix for the directed graph $\mathcal{G}(n) = (\mathcal{V}(n), \mathcal{E})$ that, together with $\bar \gamma$, satisfies \eqref{eq:lem1}. More specifically, this concerns conditions on the initial states under which the following types of asymptotic consensus are achieved. 

Asymptotic consensus of directions means that for all $i,j$ 
\begin{equation}\label{eq:5}
    \left \|\frac{x_i(t)}{\|x_i(t)\|_2} - \frac{x_j(t)}{\|x_j(t)\|_2}\right \|_2 \rightarrow 0 \text { as } t \rightarrow +\infty.
\end{equation}
Asymptotic consensus of directions with point-wise convergence means that there exists $u \in \mathbb{S}^{d-1}$, such that 
\begin{equation}\label{eq:nisse:2}
    \left \|\frac{x_i(t)}{\|x_i(t)\|_2} - u \right \|_2 \rightarrow 0 \text { as } t \rightarrow +\infty.
\end{equation}

By using matrix-notation, asymptotic consensus of directions with point-wise convergence implies that there exists a row-vector $u_c$ with nonzero entries such that
\begin{align} \label{eq:def_consensus}
X_{\infty}=u_c^T u,
\end{align}
where $X_{\infty}=\lim\limits_{t \rightarrow \infty} X(t)$.
When $\gamma_i = \gamma_j$ for all $i,j$, there exists a scalar $\kappa \neq 0$ such that $u_c = \kappa\boldsymbol{1}_{1,n}$, and hence all the $x_i$'s asymptotically converge to the same constant point, which is a scaling of $u$. This would be the situation when all the states of the agents are on the same star boundary. We then say that the algorithm exhibits asymptotic consensus of states with point-wise convergence.

\subsection{Only relative information}

A special case of algorithm \eqref{eq:main_Algo_general} is when $\gamma_i(x) = \gamma_j(x) = \frac{1}{\|x\|_p}$ for all $i,j$. In this case, \eqref{eq:kalle1} can be written as 
\begin{align}\label{eq:main_Algo_Lp}
x_i(t+1) = \frac{\sum_{j = 1}^n a_{ij}x_j(t)}{\|\sum_{j = 1}^n a_{ij}x_j(t)\|_p},
\end{align}
for every $ i \in \{1,2, \ldots, n\}$, $t\geq t_0 \geq 0$. 

Algorithm \eqref{eq:main_Algo_Lp} is presented in a global reference frame. This means that $x_i$ is the position in a global coordinate system. However, for the  Euclidean unit sphere (i.e., the set $\mathbb{S}^{d-1}$) it can be implemented in a distributed fashion by using only local and relative information. 
Indeed, suppose that at each time $t$, each agent observes the unit vectors of its neighbors expressed in its own coordinate system, where the each agent's coordinate system is expressed as an orthogonal transformation of the global coordinate system. The unit vector $x_i$, expressed in the coordinate system of agent $i$, is given by 
$\tilde x_i = x_iR_i$, where $R_i$ is the orthogonal matrix that rotates the coordinate system of agent $i$ to the global coordinate system. The unit vector $\tilde x_j$ can be expressed in the coordinate system of agent $i$ as $\tilde x_{ij}$ using the formula $\tilde{x}_{ij} = \tilde x_{j}R_{ij}^T$, where $R_{ij} = R_i^TR_j$ is the rotation matrix that rotates the coordinate system of agent $j$ to that of agent $i$. 

The $R_i$'s and the $R_{ij}$'s could be static or dynamic relative to the global coordinate system. Using relative information only, the algorithm \eqref{eq:main_Algo_Lp} is expressed as 
 \begin{equation}\label{eq:mainAlg2}
 \tilde x_i(t+1) = \frac{\sum_{j = 1}^n a_{ij}\tilde x_{ij}(t)}{\|\sum_{j = 1}^n a_{ij}\tilde x_{ij}(t)\|_2}, 
 \end{equation}
for every $ i \in \{1,2, \ldots, n\}$, $t\geq t_0\geq 0$.

\subsection{Connection to discrete-time linear consensus}\label{subsec:linear_consensus}

Consider the linear discrete-time consensus algorithm \begin{align} \label{eq:standard_cons}
    X(t+1) = AX(t),
\end{align}
where $A$ is such that asymptotic consensus of states with point-wise convergence is ensured. In \eqref{eq:standard_cons}, the row-wise radial projection of $X(t)$ onto $\mathbb{S}(n,d,\bar\gamma)$ is not present as it is in \eqref{eq:main_Algo_general} and
it is not guaranteed that $X(t) \in \mathbb{S}(n,d,\bar\gamma)$ for all $t$. If the graph $\mathcal{G}(n)$ is symmetric (or undirected) and connected, a choice of $A$ is $A = I_n - \eta L$, where 
$L$ is the graph Laplacian matrix for $\mathcal{G}(n)$ and $\eta > 0$. If $\eta$ is chosen sufficiently small, $A$ satisfies the conditions in Lemma~\ref{lemma:well-defined} while asymptotic consensus of states with point-wise convergence is ensured for algorithm \eqref{eq:standard_cons}. On the other hand, if a matrix $A$ satisfies the conditions in Lemma~\ref{lemma:well-defined}, so does $\alpha A$ where $\alpha > 0$. Furthermore, we may equivalently use $\alpha A$ instead of $A$ in \eqref{eq:main_Algo_general}, since  $P_{\mathbb{S}(n,d,\bar \gamma)}(\alpha AX(t)) = P_{\mathbb{S}(n,d,\bar \gamma)}(AX(t))$. However, for large enough $\alpha$, using $\alpha A$ instead of $A$ in \eqref{eq:standard_cons} does not ensure point-wise convergence since the system becomes unstable. 

\section{Preliminary results}\label{sec:convergence_matrices}
The section provides tools for showing the main results of the paper presented in Section~\ref{sec:convergence_to_consensus}.
Throughout we assume $A$ is a nonnegative  weight matrix for the directed graph $\mathcal{G}(n) = (\mathcal{V}(n), \mathcal{E})$, which, together with $\bar \gamma$, satisfies \eqref{eq:lem1}. 

For such an $A$, if $X(t)$ is generated by \eqref{eq:main_Algo_general} for $t \geq t_0 \geq 0$, we may write  
\begin{align} \label{eq:easier_form1:pre}
        X(t)\!=\!\tilde A(k,t_0,X(t_0))X(t_0),
    \end{align}
where $k = t - t_0$. This way of representing $X(t)$ will be formally introduced in Section~\ref{sec:convergence_to_consensus} for showing  asymptotic consensus of states or directions. 

Each $\tilde A(k,t_0,X(t_0))$-matrix, where $k > 0$, is expressed as the product of $k$ matrices, each of which comprises the product of a state-dependent diagonal matrix and the matrix $A$.
As will be shown in Section~\ref{sec:convergence_to_consensus}, the $\tilde A(k,t_0,X(t_0))$-matrices have the property that, for any pair of matrix rows, the ratio of their norms is bounded from below and above by positive constants independent of $k$, $t_0$, and $X(t_0)$. To establish this result in Section~\ref{sec:convergence_to_consensus}, we rely on Proposition~\ref{corollary:1} in this section.

In this section, we study products for a time-varying $A(t)$-matrix, given as $\tilde{A}(k,t) = A(t_0 + k-1)A(t_0 +k-2)\cdots A(t_0+1)A(t_0)$. As seen, those products do not involve states generated by \eqref{eq:main_Algo_general}. Proposition~\ref{corollary:1} establishes that, under certain assumptions, such product matrices have the property that, for any pair of matrix rows, the ratio of their norms is bounded from below and above by positive constants independent of $k$ and $t_0$. 

We then show that for such product matrices, asymptotic consensus of directions with point-wise linear convergence for the matrix rows is achieved, see Proposition~\ref{prop:convergence_Atilde}. This latter result is then used in Section~\ref{sec:convergence_to_consensus} to show an equivalent result for the  $A(k,t_0,X(t_0))$-matrices, which in turn can be used to show asymptotic consensus of directions or states for the $X(t)$-matrices.

\begin{assumption}\label{ass:new:1}
$A(t) \in \mathbb{R}^{n \times n}$ is a time-varying matrix. There is a strongly connected graph $\mathcal{G}(n)$ such that $A(t)$ is a nonnegative weight matrix with positive diagonal for all $t \geq 0$. The positive elements of $A(t)$ are bounded from below and above by the positive constants $\sigma_l$ and $\sigma_u$, respectively, for all $t \geq 0$.
\end{assumption}

Suppose $A(t)$ satisfies Assumption~\ref{ass:new:1}. The fact that there is a strongly connected graph for which each $A(t)$ is a nonnegative weight matrix with positive diagonal ensures that $A(t)$ is irreducible for each $t$. This implies each $A(t)$ contains no zero-rows or zero-columns. Furthermore, since $A(t)$ is a nonnegative matrix with $[A(t)]_{ii}~>~0$ for all $i$, it is primitive.

\begin{definition}\label{def:111}
  For a time-varying $A(t)$, where $A(t) \in \mathbb{R}^{n \times n}$ for $t \geq 0$, 
  we define the matrix $\tilde A(k, t_0)\in \mathbb{R}^{n \times n}$ as:
\begin{align}\label{eq:definition_A_tilde}
 &  \tilde A(k, t_0) = \prod_{l = 0}^{k-1} A(t_0 +l) \\
 \nonumber
=& A(t_0 + k - 1 )A(t_0 + k-2) \cdots A(t_0),  
\end{align}
for integers $k \geq 1$ and $t_0 \geq 0$. We further define $\tilde A(0,t_0) = I_n.$ 
\end{definition}
 
Suppose $A(t)$ satisfies Assumption~\ref{ass:new:1}. First, we note that $\tilde A(k, t_0)$ in Definition~\ref{def:111} is a nonnegative matrix for all $k$ and $t_0$ since it is a product of nonnegative matrices. However, given that Assumption~\ref{ass:new:1} is satisfied, we cannot simply conclude that $\|\tilde{A}(k,t_0)\|$ converges to a value as $k$ goes to infinity, nor can we simply exclude the possibility that $\lim\limits_{k \rightarrow \infty}\|\tilde{A}(k,t_0)\| \in \{0, \infty\}$, where $\|\cdot \|$ denotes some matrix norm. What can be concluded, as stated in Proposition~\ref{corollary:1} below, is that for any pair of rows of $\tilde{A}(k,t_0)$, the ratio between their norms is bounded by constants independent of $k$ and $t_0$.

\begin{proposition} \label{corollary:1}

Suppose $A(t)$ satisfies Assumption~\ref{ass:new:1}. There exist positive constants $\delta_{1,p}$ and $\delta_{2,p}$ that depend on $\sigma_l$ and $\sigma_u$ only (and not on $k$), such that
\begin{equation}
\label{eq:bounds:rows_lp}
\delta_{1,p} \leq \frac{\|[\tilde A(k,t_0)]_{i}\|_p}{\|[\tilde A(k,t_0)]_{j}\|_p} \leq \delta_{2,p},
\end{equation}  
for all $t_0 \geq 0$,  $k \geq 1$, $i \in \{1,2, \ldots, n\}$, $j \in \{1,2, \ldots, n\}$, and $p \in \mathcal{N} \cup \{\infty\}$ where $\mathcal{N}= \{1, 2, \ldots \}$. 
\end{proposition}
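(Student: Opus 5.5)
The approach is to reduce everything to the $l_1$-norm. For nonnegative matrices, the $l_1$-norm of a row is linear in the row's entries, and this makes row-norm ratios easy to control. Norm equivalence then transfers the bound to every $p$. For any $v\in\mathbb{R}^n$ and $p\in\mathcal{N}\cup\{\infty\}$ we have $\|v\|_p\le\|v\|_1\le n^{1-1/p}\|v\|_p\le n\|v\|_p$. Hence it suffices to find constants $\delta_{1,1},\delta_{2,1}$ for $p=1$ and then set $\delta_{1,p}=\delta_{1,1}/n$ and $\delta_{2,p}=n\,\delta_{2,1}$. These constants also depend on $n$, which is fixed throughout.

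I will first collect two entrywise bounds on products $\tilde A(m,s)$ under Assumption~\ref{ass:new:1}.
\begin{itemize}
\item[(a)] Upper bound: by induction on $m$, every entry of $\tilde A(m,s)$ is at most $n^{m-1}\sigma_u^m$.
\item[(b)] Lower bound: every diagonal entry of $\tilde A(m,s)$ is at least $\sigma_l^m$, since the product of the diagonal entries appears as one nonnegative term in the sum. More importantly, for $m=n-1$ every entry of $\tilde A(n-1,s)$ is at least $\sigma_l^{n-1}$. To see this, strong connectivity gives a directed path from $i$ to $j$ of length at most $n-1$. Padding this path with self-loops, which are present because every $A(t)$ has a positive diagonal, gives a walk of length exactly $n-1$. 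The corresponding product $[A(s+n-2)]_{i\,i_1}\cdots[A(s)]_{i_{n-2}\,j}$ is a single nonnegative term in the expansion of $[\tilde A(n-1,s)]_{ij}$, and it is at least $\sigma_l^{n-1}$.
\end{itemize}
The only care needed here is the index order of the product in Definition~\ref{def:111}. Since the graph $\mathcal{G}(n)$ is the same for all $t$, the walk works regardless of which matrix sits at which position.

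Case $k\ge n-1$: factor $\tilde A(k,t_0)=B\,C$ with $B=\tilde A(n-1,t_0+k-n+1)$ and $C=\tilde A(k-n+1,t_0)$, where $C=I_n$ if $k=n-1$. The matrix $C$ is nonnegative with no zero rows, because its diagonal is positive by (b). Since all quantities are nonnegative,
\begin{align*}
\|[BC]_i\|_1=\sum_{m=1}^n [B]_{im}\,\|[C]_m\|_1 .
\end{align*}
Each $[B]_{im}$ lies in $[\sigma_l^{n-1},\,n^{n-2}\sigma_u^{n-1}]$, so the ratio of the $l_1$-norms of rows $i$ and $j$ lies in $[\rho^{-1},\rho]$ with $\rho=n^{n-2}(\sigma_u/\sigma_l)^{n-1}$.

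Case $1\le k<n-1$: use (a) and (b) directly. Row $i$ has $l_1$-norm at least $\sigma_l^k$, from its diagonal entry, and at most $n\cdot n^{k-1}\sigma_u^k$. Hence the ratio lies within $[(\sigma_l/(n\sigma_u))^k,(n\sigma_u/\sigma_l)^k]$. Taking the worst case over the finitely many $k<n-1$, and the minimum and maximum with the previous case, gives $\delta_{1,1}$ and $\delta_{2,1}$ independent of $k$ and $t_0$.

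The main obstacle is the uniform positivity in (b): it must give an entrywise lower bound for a product of $n-1$ arbitrary time-varying factors. This rests on the self-loops combined with a fixed strongly connected graph. Once that bound is in place, linearity of the $l_1$-norm over nonnegative rows gives the uniform ratio immediately, and no information about whether $\|\tilde A(k,t_0)\|$ blows up or vanishes is needed.
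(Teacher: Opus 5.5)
Your proof is correct and follows essentially the same route as the paper. Both reduce to the $l_1$-norm by norm equivalence and bound small $k$ crudely. For large $k$, both peel off a block of final factors whose entries are uniformly bounded above and below, then use linearity of the $l_1$-norm on nonnegative rows. The differences are cosmetic: you split off $n-1$ factors and prove their entrywise positivity with an explicit padded-walk argument, whereas the paper splits off $n$ factors and invokes positivity of $\bar A^n$ via a reference.
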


\textbf{Proof:} 
If $y \in \mathbb{R}^n$, and $q_1$ and $q_2$ are positive integers or equal to $\infty$, such that $1\leq q_1 \leq q_2$, then $\|y\|_{q_1} \leq n^{(\frac{1}{q_1} - \frac{1}{q_2})}\|y\|_{q_2}$, where $\frac{1}{\infty}: = 0$. Thus, if the statement in Proposition~\ref{corollary:1} is true for $p= 1$, the statement is true for any $p \in \mathcal{N} \cup \{\infty\}$. Thus, we continue to prove the statement for $p=1$. 

Due to Assumption~\ref{ass:new:1}, $\sigma_lI_n \leq A(t) \leq \sigma_u \boldsymbol 1_{n,n}$. By using these two inequalities and the definition of $\tilde A(k,t_0)$ \eqref{eq:definition_A_tilde}, we obtain the inequalities
\begin{equation}
\label{eq:bounds_norm1}
\sigma_l^{k} \leq \|[\tilde{A}(k,t_0)]_{i}\|_1 \leq \sigma_u^k n^{k} \text{   for all }i, k, t_0 \geq 0. 
\end{equation}
Thus,
\begin{align}
\label{eq:bounds:new1}
&\frac{\sigma_l^{k}}{\sigma_u^k n^{k}} \leq \frac{\|[\tilde A(k,t_0)]_{i}\|_1}{\|[\tilde A(k,t_0)]_{j}\|_1} \leq \frac{\sigma_u^k n^{k}}{\sigma_l^{k}} \text{ for all } i,j,k, t_0 \geq 0.
\end{align}
If $1 \leq k \leq n$, then $\sigma_l^{n}/(\sigma_u^{n} n^{n})$ and $(\sigma_u^{n} n^{n})/\sigma_l^{n}$ are a lower and an upper bound, respectively, on $\|[\tilde A(k,t_0)]_{i}\|_1/\|[\tilde A(k,t_0)]_{j}\|_1$ for all $i$, $j$, $t_0 \geq 0$. 

It remains to consider the case $k > n$. By using the definition of $\tilde{A}(k,t_0)$ \eqref{eq:definition_A_tilde}, we may write 
\begin{equation}\label{eq:spliting_A_tilde}
\tilde A(k, t_0) = \tilde{A}(n, t_0+k-n)\tilde{A}(k-n,t_0). 
\end{equation}
By using \eqref{eq:spliting_A_tilde} and the fact that $A(t)$ is nonnegative for all $t$, we obtain:
\begin{align}
\nonumber
& \|[\tilde{A}(k,t_0)]_i\|_1
= \|[\tilde{A}(n, t_0+k-n)\tilde{A}(k-n,t_0)]_i\|_1 \\
\label{eq:nils:1}
=~& \sum_{j=1}^n [\tilde{A}(n, t_0+k-n)]_{ij}\|[\tilde{A}(k-n,t_0)]_j\|_1.
\end{align}
Let 
\begin{equation}\label{eq:beta}
    \beta(k,t_0) = \max\limits_{1 \leq j \leq n}\left (\|[\tilde{A}(k-n,t_0)]_j\|_1\right ),
\end{equation}
where we know that $\sigma_l^{k-n} \leq \beta(k, t_0) \leq \sigma_u^{k-n} n^{k-n}$ due to \eqref{eq:bounds_norm1}.

Let $\bar A$ be the binary nonnegative weight matrix with positive diagonal for the strongly connected graph $\mathcal{G}(n)$. 
It holds that 
\begin{equation}\label{eq:lemma1_ineq_Atilda}
\sigma_l^n \bar A^n \leq \tilde{A}(n, t_0+k-n) \leq \sigma_u^n \bar A^n.
\end{equation}
 Since $\bar A$ is a nonnegative, irreducible and binary matrix with positive elements on the diagonal, $\bar A^n$ is a positive matrix where the smallest element thereof is greater than or equal to $1$, see~\citep{horn2012matrix} for further details. However, the elements of $\bar A^n$ must also be smaller than or equal to $n^{n-1}$ since $\bar A^n \leq \bold{1}_{n,n}^n$. From these results together with \eqref{eq:lemma1_ineq_Atilda}, we obtain 
\begin{equation}\label{eq:lemma1_ineq_Atilda2}
\sigma_l^n \leq [\tilde{A}(n, t_0+k-n)]_{ij} \leq \sigma_u^n n^{n-1},
\end{equation}
for all $i,j$.

By using \eqref{eq:beta} and \eqref{eq:lemma1_ineq_Atilda2} in \eqref{eq:nils:1}, we get that
\begin{equation}\label{eq:nils2}
 \sigma_l^{n}\beta(k,t_0) \leq \|[\tilde{A}(k,t_0)]_i\|_1 \leq  \sigma_u^n n^{n}\beta(k,t_0),
\end{equation}
for all $i$. 
From \eqref{eq:nils2} we obtain for $k > n$ that
\begin{align}
\label{eq:nils3}
&\frac{\sigma_l^{n}}{\sigma_u^n n^{n}} \leq \frac{\|[\tilde A(k,t_0)]_{i}\|_1}{\|[\tilde A(k,t_0)]_{j}\|_1} \leq \frac{\sigma_u^n n^{n}}{\sigma_l^{n}} \text{ for all } i,j.
\end{align}
Thus, all cases $k \geq 1$ are covered, yielding 
\begin{equation}
    \delta_{1,1} = \frac{\sigma_l^{n}}{\sigma_u^n n^{n}}, \quad \text{ and } \quad \delta_{2,1} = \frac{\sigma_u^n n^{n}}{\sigma_l^n}.
\end{equation}
~$\hfill$ $\qed$

Now we provide the main result of the section, which can be formulated as follows: there is asymptotic consensus of directions for the $\tilde A(k,t_0)$-matrices with a linear convergence rate independent of the choice of $t_0 \geq 0$. 

\begin{proposition}\label{prop:convergence_Atilde}
For $A(t)$ satisfying Assumption~\ref{ass:new:1}, there exists $$v: \mathcal{N}_0 \rightarrow \mathbb{S}^{n-1},$$ such that each
sequence $\left \{ \frac{[\tilde{A}(k,t_0)]_i}{\|[\tilde{A}(k,t_0)]_i\|_2} \right \}_{k = 1}^{\infty}$ for all $i\in\{1,2,\dots,n\}$ converges linearly to the same vector $v(t_0)$. 
\end{proposition}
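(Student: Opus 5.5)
Our plan is to track the cone generated by the rows of $\tilde A(k,t_0)$ and show that it shrinks geometrically to a single ray. By the invariance noted after the definition of the minimum cone, the identity $\tilde A(k+1,t_0)=A(t_0+k)\tilde A(k,t_0)$ gives $\mathcal{C}(\tilde A(k+1,t_0))\subseteq \mathcal{C}(\tilde A(k,t_0))$. The rows are nonnegative, so the cones form a nested sequence in the nonnegative orthant. To quantify the shrinkage we measure each cone by $\phi_{\max}(\tilde A(k,t_0))$ via \eqref{eq:cos_min}. A more convenient measure is the Hilbert projective diameter, $\Delta(B)=\max_{i,j,l,m}\log\frac{[B]_{il}[B]_{jm}}{[B]_{im}[B]_{jl}}$, which applies once all entries are positive. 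For $k\ge n$ this holds, because $\tilde A(n,s)\ge \sigma_l^n\bar A^n>0$ with entries bounded as in \eqref{eq:lemma1_ineq_Atilda2}, exactly as in the proof of Proposition~\ref{corollary:1}.

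The key step is a uniform contraction over blocks of length $n$. Write $\tilde A(k+n,t_0)=\tilde A(n,t_0+k)\tilde A(k,t_0)$, where the left factor $M=\tilde A(n,t_0+k)$ satisfies $\sigma_l^n\le [M]_{ij}\le \sigma_u^n n^{n-1}$. Each row of the product is a positive combination of the rows of $\tilde A(k,t_0)$. The ratios of the weights, $[M]_{ij}\|[\tilde A(k,t_0)]_j\|_1$ normalized per row, are bounded uniformly by $(\sigma_u^n n^{n-1}/\sigma_l^n)\,\delta_{2,1}$, using Proposition~\ref{corollary:1}. Birkhoff's contraction theorem then yields $\Delta(MB)\le \tanh(D/4)\,\Delta(B)$, with $D=\log\big((\sigma_u^n n^{n-1}/\sigma_l^n)^2\big)$, so the contraction factor $\rho:=\tanh(D/4)<1$ depends only on $\sigma_l,\sigma_u,n$. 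If we prefer to avoid Birkhoff, we can argue directly. Write each new row as $\sum_j w_{ij}r_j$, where $r_j$ are the $l_1$-normalized old rows and $w_{ij}\ge c>0$ with $\sum_j w_{ij}=1$. Every new row then equals $c\bar r+(1-c)(\text{convex combination of the } r_j)$ with a common $\bar r$, so the $l_1$-diameter of the normalized rows contracts by the factor $1-c$. In this argument $c$ is uniform, by \eqref{eq:lemma1_ineq_Atilda2} together with Proposition~\ref{corollary:1}. We will likely use this elementary version.

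To conclude, iterate the contraction: the diameter of the set of $l_1$-normalized rows of $\tilde A(k,t_0)$ is at most $C\rho^{\lfloor k/n\rfloor}$, since the initial diameter is at most $2$. Because the cones are nested, every normalized row at any later time $k'\ge k$ lies in the convex hull of the normalized rows at time $k$. Hence each sequence of normalized rows is Cauchy with geometric tail, and all rows converge to a common limit $\tilde v(t_0)$ in the $l_1$-simplex, with $\|r_i(k)-\tilde v(t_0)\|_1\le C\rho^{\lfloor k/n\rfloor}$. Setting $v(t_0)=\tilde v(t_0)/\|\tilde v(t_0)\|_2\in\mathbb{S}^{n-1}$ gives the claimed limit. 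The map $x\mapsto x/\|x\|_2$ is Lipschitz on the simplex, since $\|x\|_2\ge 1/\sqrt n$ there, so linear convergence transfers to the $l_2$-normalized rows, with a rate independent of $t_0$.

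The main obstacle is obtaining a contraction constant that is uniform in $k$ and $t_0$. The weights $w_{ij}$ involve the unbounded norms $\|[\tilde A(k,t_0)]_j\|_1$, which may tend to $0$ or $\infty$. Only their ratios are controlled, and this is precisely what Proposition~\ref{corollary:1} supplies. The lower bound on $w_{ij}$ must therefore be derived carefully from the product of \eqref{eq:lemma1_ineq_Atilda2} with $\delta_{1,1}$, and checked to be independent of $t_0$.
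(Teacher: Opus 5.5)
Your argument is correct, but it takes a different route from the paper.

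\textbf{The paper's route.} The paper keeps the Euclidean normalization throughout. It writes $Z(s+1)=P_{\mathbb{S}(n,n)}(\bar B(s)Z(s))$ with a row-stochastic $\bar B(s)$ whose entries lie in $[\tilde\delta_1,\tilde\delta_2]$; Proposition~\ref{corollary:1} enters through the diagonal matrix $V(k,t_0)$ of row-norm ratios. It then bounds the inner products to get $\cos\phi_{s+1}\ge \cos\phi_s/(1-\tilde\delta_1^2(1-\cos\phi_s))$ and deduces $\cos\phi_s\to 1$ by monotonicity. Only after that does it extract a contraction factor $1-\tilde\delta_1^2/2$, valid for $s\ge s_0$, via a Taylor expansion.

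\textbf{Your route.} You normalize in $l_1$, which makes every update an exact convex combination inside the simplex. A Doeblin-type minorization (or Birkhoff's theorem in the Hilbert metric) then gives a contraction factor $1-c$ from the very first block. The constant is explicit, $c=\sigma_l^{2n}/(\sigma_u n)^{2n}$, and so is the bound $\|r_i(k)-\tilde v(t_0)\|_1\le 2(1-c)^{\lfloor k/n\rfloor}$.

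\textbf{What each buys.} Your version makes the uniformity of the rate in $t_0$ transparent. That uniformity is exactly what the proof of Proposition~\ref{prop_new:3} later relies on. The paper's ``there must be an $s_0$'' leaves it implicit; making it explicit would need a uniform lower bound on $\cos\phi_1$. The price you pay is the final Lipschitz transfer to $l_2$-normalized rows, which you handle correctly. The paper's angle-based argument, in turn, fits its shrinking-geodesic-hull picture directly.

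\textbf{A remark on your ``main obstacle.''} Proposition~\ref{corollary:1} is needed only because you ask for the entrywise bound $w_{ij}\ge c$. With the Dobrushin coefficient instead, you get $\sum_j\min_i w_{ij}\ge \sigma_l^n/(\sigma_u^n n^{n-1})$, because the unknown row norms cancel when summed over $j$. So, as in your Birkhoff variant, the contraction depends on $M$ alone and Proposition~\ref{corollary:1} is not needed.
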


\textbf{Proof:}
We define for $k \geq 1$
\begin{align} \label{eq:Y_kt}
Y(k,t_0) = P_{\mathbb{S}(n,n)}(\tilde{A}(k,t_0)),
\end{align}
which is the row-normalized $\tilde{A}(k,t_0)$-matrix with respect to the Euclidean norm. 
Then $Y(k+l, t_0)=P_{\mathbb{S}(n,n)}(\tilde{A}(k+l,t_0))$ for $l\geq 0$  can be expressed as
\begin{equation}\label{eq:nisse3}
Y(k+l, t_0) = P_{\mathbb{S}(n,n)}(B(l, k + t_0)Y(k,t_0)),
\end{equation}
where $B(l,k + t_0) = P_{\mathbb{S}(n,n,\bar \gamma_{l_1})}(\tilde A(l,k + t_0)V(k,t_0))$, and $V(k,t_0)$ is a positive diagonal matrix with the $i$'th diagonal element given by $\frac{\|[\tilde{A}(k,t_0)]_i\|_2}{\|[\tilde{A}(k,t_0)]_1\|_2}$. Note that $V(k,t_0)$ is a bounded matrix for all $t_0 \geq 0$, $k \geq 1$ according to Proposition~\ref{corollary:1}.
Equation \eqref{eq:nisse3} holds, since 
\begin{align}
    \nonumber
    & P_{\mathbb{S}(n,n)}(B(l,k + t_0)Y(k,t_0))\\
    \nonumber
    =~& P_{\mathbb{S}(n,n)}(P_{\mathbb{S}(n,n,\bar \gamma_{l_1})}(\tilde A(l,k+t_0)V(k,t_0))Y(k,t_0)) \\
    \nonumber
    =~& P_{\mathbb{S}(n,n)}\left (\frac{\tilde A(l,k+t_0)\tilde A(k,t_0)}{\|[\tilde{A}(k,t_0)]_1\|_2} \right) \\
    =~&  P_{\mathbb{S}(n,n)}(\tilde A(k+l, t_0)) = Y(k+l,t_0).
\end{align}

By definition, $Y(k,t_0) \in \mathbb{S}(n,n)$ is nonnegative for all $t_0$.
Since $B(l,k+t_0)$ is a stochastic matrix,
the minimum cone of the matrix $Y(k+1,t_0)$ is a subset of the minimum cone of $Y(k,t_0)$, i.e., $\mathcal{C}(Y(k+1,t_0)) \subset \mathcal{C}(Y(k,t_0))$, which means that the maximum angle between rows is decreasing as a function of $k$.

In the following, we show the stronger result that the maximum angle between the rows of $Y(k,t_0)$ decreases to zero as $k \rightarrow \infty$. Consequently, all the rows of $Y(k,t_0)$ converge to the same vector. Our proof considers $l = n$ and $k = sn$ for the integers $s \geq 1$. In between these time steps, the maximum angle between rows is not increasing due to the cone invariance property discussed above.

In the following, we set $l=n$. There are positive constants $\tilde{\delta}_1$ and $\tilde{\delta}_2$ that only depend on $\sigma_l$ and $\sigma_u$ from Assumption~\ref{ass:new:1} such that
\begin{align} \label{ineq:bounds_B}
 0 < \tilde{\delta}_1 \leq [B(n,k + t_0)]_{ij} \leq \tilde{\delta}_2 < 1  
\end{align} holds for any $t_0 \geq 0$, $k \geq n$, and all $i$, $j$. These bounds are assured by the following. The diagonal elements of the diagonal matrix $V(k + t_0)$ are uniformly bounded from above and below by positive constants, expressed in terms of $\sigma_l$ and $\sigma_u$, according to Proposition~\ref{corollary:1}. The elements of $\tilde{A}(n,k+t_0)$ are also uniformly bounded by such constants; see \eqref{eq:lemma1_ineq_Atilda2} in the proof of Proposition~\ref{corollary:1}. Hence, the constants we seek, $\tilde \delta_1$ and $\tilde \delta_2$, exist for the matrix $B(n,k + t_0) = P_{\mathbb{S}(n,n,\bar \gamma_{l_1})}(\tilde A(n,k + t_0)V(k,t_0))$.

Let us now consider the time-evolution of $Z(s) = Y(sn, t_0)$: 
\begin{align}\label{eq:simpler_form}
Z(s + 1) = P_{\mathbb{S}(n,n)}(\bar B(s) Z(s)),
\end{align}
where $\bar{B}(s) = B(n,sn + t_0)$  and $Z(s) = Y(sn, t_0) \in  \mathbb{S}(n,n)$ is positive for all $s \geq 1$ and $t_0 \geq 0$. The matrix $\bar B(s)$ is row-stochastic with all its elements being positive and bounded from below and above by $\tilde \delta_1$ and $\tilde \delta_2$, respectively.

For the maximum angle $\phi_s=\phi_{max}(Z(s))$ (see \eqref{eq:phi_max} for the definition) between rows of the matrix $Z(s)$, we define 
\begin{align}
 \mathcal{J}_Z(s) = \{(i,j) : [Z(s)]_i[Z(s)]_j^T = \cos(\phi_s)\}.   
\end{align}The function $\phi_{\max}(\cdot)$ is continuous on $\mathbb{S}(n,n)$. The set $\mathcal{J}_Z(s)$ is the set of index pairs for which the inner product of the corresponding unit rows of $Z(s)$ equals $\cos(\phi_s)$. Now, consider an arbitrary index pair $(\bar i,\bar j)$. By using \eqref{eq:simpler_form}, we can write
\begin{align} \label{eq:ineq_inner_product_Z(s+1)}
    & [Z(s+1)]_{\bar i}[Z(s+1)]_{\bar j}^T\\ \nonumber 
    =&  \frac{\sum_{(i,j)} [\bar B(s)]_{\bar ii}[\bar B(s)]_{\bar jj}[Z(s)]_{i}[Z(s)]_{j}^T}{\|\sum_{i = 1}^n[\bar B(s)]_{\bar ii}[Z(s)]_{i}\|_2\|\sum_{j = 1}^n[\bar B(s)]_{\bar jj}[Z(s)]_{j}\|_2}. \nonumber
\end{align}
Here, the notation $\sum_{(i,j)}$ denotes summing over all pairs of indices $i$ and $j$. 

Since $Z(s)$ is a positive matrix for all $s \geq 1$, it follows that
\begin{align} \label{eq:ineq_ZiZj}
0 < \cos(\phi_s)\leq [Z(s)]_i[Z(s)]_j^T\leq 1,
\end{align}
for any pair of indices $(i,j)$.
Next, we notice that since $\bar B(s)$ is a stochastic matrix, it holds that $\sum_{(i,j)}[\bar B(s)]_{\bar ii}[\bar B(s)]_{\bar jj}=1$. By combining these two observations, we conclude that the numerator of the right-hand side of \eqref{eq:ineq_inner_product_Z(s+1)} is bounded from below by $\cos(\phi_s)$.

Now we take a closer look at the denominator. We observe that
\begin{align}
\label{eq:kalle:100}
     &\bigg\|\sum_{i = 1}^n [\bar B(s)]_{\bar ii} [Z(s)]_i\bigg\|_2^2\\ \nonumber
    & =\sum_{(i,j)} [\bar B(s)]_{\bar ii} [\bar B(s)]_{\bar ij} [Z(s)]_i [Z(s)]_j^T.
\end{align}
We can split the sum in \eqref{eq:kalle:100} into two parts: one over pairs of indices not in $\mathcal{J}_Z(s)$ and the other over pairs of indices in $\mathcal{J}_Z(s)$.  By using \eqref{eq:ineq_ZiZj}, we obtain
\begin{align}
    \nonumber
    & \left\|\sum_{i = 1}^n [\bar B(s)]_{\bar ii} [Z(s)]_i\right\|_2^2  \\
    &\leq \sum_{(i,j) \notin \mathcal{J}_Z(s)}[\bar B(s)]_{\bar ii}[\bar B(s)]_{\bar ij} \cdot 1 \; \\
    &\quad  +\sum_{(i,j)\in \mathcal{J}_Z(s)}[\bar B(s)]_{\bar i i}[\bar B(s)]_{\bar i j }\cos( \phi_s). \nonumber
\end{align}
By adding and subtracting the same term
\begin{equation}
    \sum_{(i,j) \in \mathcal{J}_Z(s)}[\bar B(s)]_{\bar ii}[\bar B(s)]_{\bar ij},
\end{equation}
we obtain the following:
\begin{align}
\nonumber
    & \bigg\|\sum_{i = 1}^n [\bar B(s)]_{\bar ii} [Z(s)]_i\bigg\|_2^2 \leq \sum_{(i,j)}[\bar B(s)]_{\bar ii}[\bar B(s)]_{\bar ij} \\
    \nonumber
    + & \sum_{(i,j)\in \mathcal{J}_Z(s)}\!\![\bar B(s)]_{\bar i i}[\bar B(s)]_{\bar i j }( \cos(\phi_s)-1) \\
    \leq~& 1 - \tilde \delta_1^2(1 - \cos(\phi_s)),
\end{align}
where $\tilde \delta_1$ is the lower bound for elements of the matrix $\bar B(s)$; see \eqref{ineq:bounds_B}. 
Consequently, we establish the following bound for the denominator of \eqref{eq:ineq_inner_product_Z(s+1)}:
\begin{align}
    \nonumber
    & \bigg\|\sum_{i = 1}^n[\bar B(s)]_{\bar ii}[Z(s)]_{i}\bigg\|_2 \bigg\|\sum_{j = 1}^n[\bar B(s)]_{\bar jj}[Z(s)]_{j}\bigg\|_2 \\
    \leq~& 1 - \tilde \delta_1^2(1 - \cos(\phi_s)). 
\end{align}
Since $(\bar i,\bar j)$ was arbitrary, we can now conclude that  \begin{align}\label{eq:ineq_phi}
    & \cos(\phi_{s+1}) \geq \frac{\cos(\phi_s)}{1 - \tilde\delta_1^2(1 - \cos(\phi_s))} \geq \cos(\phi_s).
\end{align} 
The sequence $\{\cos(\phi_s)\}$ is bounded and non-decreasing. Therefore, by the monotone convergence theorem \citep{rudin1964principles}, it converges to some limit $L$ where $0 \leq L \leq 1$. Furthermore, by applying the ``squeeze theorem'' to \eqref{eq:ineq_phi}, we conclude that $L$ must be either $0$ or $1$.
Since $\cos(\phi_s)>0$ \eqref{eq:ineq_ZiZj} at $s=1$ and remains positive for all $s>1$, it follows that the sequence $\{\cos(\phi_s)\}$ converges to $L=1$ as $s \to \infty$. This implies that the largest angle between any pair of rows of the matrix $Z(s)$ converges to $0$.
Since $\mathcal{C}(Y(k+1,t_0)) \subset \mathcal{C}(Y(k,t_0))$, the angle between any pair of rows of $Y(k,t_0)$ also goes to $0$ as $k \to \infty$. Consequently, there exists a unit vector $v(t_0)$ to which all the rows of $Y(k,t_0)$ converge. 

By using \eqref{eq:ineq_phi}, we can also deduce the convergence rate. We define   
\begin{equation}
    e_s = 1 - \cos(\phi_s) = \max_{i,j}\frac{\|Z_{i}(s) - Z_{j}(s)\|_2^2}{2}.
\end{equation}
The Taylor expansion of
\begin{equation}
    g(e_s)= \frac{1-e_s}{1 - \tilde\delta_1^2e_s} = \frac{\cos(\phi_s)}{1 - \tilde\delta_1^2(1 - \cos(\phi_s))}
\end{equation}
at the point $0$ is given by 
\begin{equation}
    g(e_s)=1-(1 - \tilde \delta_1^2)e_s+O(e_s^2).
\end{equation}
Thus, there must be an $s_0$ such that for all $s \geq s_0$, it holds that 
$1 - g(e_s) \leq (1 - \frac{\tilde \delta_1^2}{2})e_{s}$. Now, by combining this result with \eqref{eq:ineq_phi}, we conclude that
\begin{equation}
    e_{s+1}\leq (1-\frac{\tilde \delta_1^2}{2}) e_{s},
\end{equation}
for $s \geq s_0$, 
which ensures that the largest distance between rows of $Z(s)$ converges linearly to zero. Moreover, since $\mathcal{C}(Y(k+1,t_0)) \subset \mathcal{C}(Y(k,t_0))$, it follows that $\sqrt{2e_s}$ upper bounds the maximum distance between any row of $Y(k,t_0)$ and $v(t_0)$ for $k \geq n$. 
\hfill \qed

\section{Main results}\label{sec:convergence_to_consensus}
In this section, we present the main results on the convergence for the algorithm \eqref{eq:main_Algo_general}.
Using Proposition \ref{prop:convergence_Atilde}, we provide a necessary and sufficient condition for asymptotic consensus of directions. As this condition is hard to verify explicitly, we further provide additional assumptions under which the condition is guaranteed to be fulfilled.

We first start by noting that  \eqref{eq:main_Algo_general} can be written in the following alternative form
\begin{align} \label{eq:easier_form1}
        X(t_0+k)=\tilde A(k,t_0,X_0)X_0,
    \end{align}
where $X_0=X(t_0)$, $k > 0$, and 
\begin{align}
\label{eq:olle2000}
    &\tilde  A(k,t_0,X_0)=\prod_{m = 0}^{k-1}\left ( D_{\bar \gamma}(AX(t_0+m)) A \right )\\
    & =D_{\bar \gamma}(AX(t_0 + k - 1 ))A \cdots D_{\bar \gamma}(AX(t_0))A, \nonumber
\end{align}
where for a matrix $B$ with non-zero rows
\begin{align} \label{D_bar_gamma}
D_{\bar \gamma}(B)\!=\!\text{diag}\Bigg(\frac{\gamma_1\left(\!\frac{[B]_1}{\|[B]_1\|_2}\!\right)} {\|[B]_1\|_2}, \dots, \frac{\gamma_n\left(\!\frac{[B]_n}{\|[B]_n\|_2}\!\right)} {\|[B]_n\|_2}\Bigg).
\end{align}
This holds since $P_{\mathbb{S}(n,d,\bar \gamma)}(AX) = D_{\bar \gamma}(AX)AX$. 

\begin{assumption}\label{ass:100}
$\bar \gamma = (\gamma_1, \gamma_2, \ldots, \gamma_n)$ is such that $\gamma_i$ is a directional function on $\mathbb{S}^{d-1}$ for each $i$. $A = [a_{ij}]$ is a nonnegative weight matrix for a strongly connected graph $\mathcal{G}(n)$. 
The matrix $A = [a_{ij}] \in \mathbb{R}^{n \times n}$ satisfies
\begin{align}
\label{eq:assumption_2}
    a_{ii}\min_{{x} \in \mathbb{S}^{d-1}} \gamma_i (x)> \sum_{j \neq i}a_{ij} \max_{{x} \in \mathbb{S}^{d-1}} \gamma_j (x)  \; \;\forall i \in \{1,...,n\}.
\end{align}
\end{assumption}

\begin{proposition}~\label{prop_new:3}
Suppose the matrix $A$ together with $\bar \gamma$ satisfies Assumption~\ref{ass:100} and $X(t)$ is given by \eqref{eq:main_Algo_general}. Let $\tilde A(k,t_0,X_0)$ be given by \eqref{eq:olle2000}. Then there exists a continuous function
\begin{equation}
v: \mathbb{S}(n,d, \bar \gamma) \rightarrow (R^+)^n \cap \mathbb{S}^{n-1},
\end{equation}
such that each sequence $\left \{\frac{[\tilde{A}(k,t_0,X_0)]_i}{\|[\tilde{A} (k,t_0,X_0)]_i\|_2}\right\}_{k=1}^{\infty}$ for all $i\in\{1,2,\dots,n\}$ converges linearly to the same vector $v(X_0)$.
\end{proposition}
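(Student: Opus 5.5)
The plan is to reduce the statement to Proposition~\ref{prop:convergence_Atilde} by viewing, for a fixed initial state $X_0$, the state-dependent product \eqref{eq:olle2000} as a product of time-varying matrices $A(t)=D_{\bar \gamma}(AX(t))A$ that satisfy Assumption~\ref{ass:new:1} with constants independent of $X_0$. Continuity of $v$ in $X_0$ is handled separately, using the uniformity of the linear rate.

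\textbf{Step 1 (uniform bounds).} For $X\in\mathbb{S}(n,d,\bar\gamma)$, Lemma~\ref{lemma:well-defined} gives $[AX]_i\neq \bold 0_{1,d}$. The bound needs to be quantitative, so I use the slack in \eqref{eq:assumption_2} and the triangle inequality:
\begin{equation*}
\|[AX]_i\|_2\geq a_{ii}\min\gamma_i-\sum_{j\neq i}a_{ij}\max\gamma_j=:c_i>0 .
\end{equation*}
The upper bound $\|[AX]_i\|_2\leq \sum_j a_{ij}\max\gamma_j$ is immediate. Since each $\gamma_i$ is bounded above and below by positive constants, the diagonal entries of $D_{\bar\gamma}(AX)$ lie in a fixed interval $[\underline d,\overline d]\subset(0,\infty)$. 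Hence $A(t):=D_{\bar\gamma}(AX(t))A$ has the same sparsity pattern as $A$, which is strongly connected with positive diagonal. Its positive entries lie in $[\underline d\min_{a_{ij}>0}a_{ij},\ \overline d\max a_{ij}]=[\sigma_l,\sigma_u]$, uniformly in $X_0$ and $t$. Assumption~\ref{ass:new:1} therefore holds, and $\tilde A(k,t_0,X_0)$ coincides with $\tilde A(k,t_0)$ of Definition~\ref{def:111} for this sequence.

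\textbf{Step 2 (existence and linear rate).} Applying Proposition~\ref{prop:convergence_Atilde} yields a limit direction, which I denote $v(X_0)$. It is nonnegative as a limit of nonnegative unit vectors. It is in fact positive, because for $k\geq n$ the rows of $\tilde A(k,t_0,X_0)$ are positive with entry ratios uniformly bounded: $\tilde A(k)=\tilde A(n,\cdot)\tilde A(k-n)$ with the first factor entrywise in $[\sigma_l^n,\sigma_u^nn^{n-1}]$, so each entry of a row is at least a fixed fraction of its $l_1$ norm. The process is time-invariant, since $X(t)$ depends only on $X_0$ and $k$ and not on $t_0$, so $v$ is a function of $X_0$ alone. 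Because $\tilde\delta_1$ depends only on $\sigma_l,\sigma_u$, the contraction $e_{s+1}\le(1-\tilde\delta_1^2/2)e_s$ in that proof has a uniform factor. I expect the threshold $s_0$ can also be made uniform: it comes from a Taylor remainder of the explicit function $g$, so $s_0$ can be chosen once $e_s$ is below a fixed level. Moreover $e_s\to0$ uniformly because $\cos\phi_1$ is bounded below uniformly, by the positivity bound above.

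\textbf{Step 3 (continuity), the main obstacle.} For fixed $k$, the map $X_0\mapsto \tilde A(k,t_0,X_0)$ is continuous, as a composition of continuous maps on the set where rows are nonzero (this uses continuity of the $\gamma_i$ and Step~1). So each row direction $Y_k(X_0)$ is continuous in $X_0$. By Step~2, $\sup_{X_0}\max_i\|[Y_k(X_0)]_i-v(X_0)\|_2\le C\rho^{k}$ with $C,\rho<1$ independent of $X_0$. Then $v$ is a uniform limit of continuous functions and hence continuous. The delicate point is making the rate genuinely uniform, including the pre-asymptotic phase before $s_0$. If the Taylor argument proves awkward, I would instead use directly that $1-g(e)\le(1-\tilde\delta_1^2 e_{\max}\cdots)$; more simply, $g(e)\ge 1-e$ implies $e_{s+1}\le e_s\,\frac{1-\tilde\delta_1^2}{1-\tilde\delta_1^2e_s}\le e_s\,\frac{1-\tilde\delta_1^2}{1-\tilde\delta_1^2\bar e}$, where $\bar e<1$ is the uniform bound on $e_1$. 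This gives a global uniform linear rate and avoids $s_0$ entirely.
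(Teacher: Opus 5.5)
Your proposal follows the paper's proof. You bound the diagonal of $D_{\bar\gamma}(AX)$ uniformly (explicitly via the slack in \eqref{eq:assumption_2}, where the paper uses continuity on the compact set $\mathbb{S}(n,d,\bar\gamma)$), so Assumption~\ref{ass:new:1} holds; you invoke Proposition~\ref{prop:convergence_Atilde}; and you get continuity of $v$ as a uniform limit of the continuous finite-$k$ row directions, which is the paper's $\epsilon/3$ argument. Your removal of the threshold $s_0$ via $e_{s+1}\le e_s(1-\tilde\delta_1^2)/(1-\tilde\delta_1^2\bar e)$ and your positivity argument make rigorous two points the paper leaves implicit: uniformity of $T(\epsilon/3)$ in $X_0$, and $v(X_0)>0$.

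Two small repairs are needed. First, that bound follows from the identity $1-g(e)=e(1-\tilde\delta_1^2)/(1-\tilde\delta_1^2 e)$, not from $g(e)\ge 1-e$. Second, in the positivity step the uniformly positive factor must sit on the right, $\tilde A(k,t_0)=\tilde A(k-n,t_0+n)\tilde A(n,t_0)$. On the left it only makes $[\tilde A(k,t_0)]_{ij}$ comparable to the $j$-th column sum of $\tilde A(k-n,t_0)$, which says nothing about the spread of entries within a row.
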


\textbf{Proof:}
Given that Assumption~\ref{ass:100} is satisfied, Lemma~\ref{lemma:well-defined} applies. Thus, 
\begin{equation}
    X \mapsto AX \mapsto [D_{\bar \gamma}(AX)]_{ii},
\end{equation}
is a continuous function from the compact set $\mathbb{S}(n,d, \bar \gamma)$ to $R^+$ for each $i$. This means that the range of the map for each $i$ is bounded from below and above by two positive constants. This, in turn, means that all the $n$ diagonal elements of $D_{\bar \gamma}(AX)$ are bounded from above and below by two positive constants. Hence $A(m) = D_{\bar \gamma}(AX(t_0+m))A$ satisfies Assumption~\ref{ass:new:1}, and Proposition~\ref{prop:convergence_Atilde} ensures that there is a unit vector $v(X_0)$ such that the sequence in question converges linearly to $v(X_0)$. The rate of convergence may be expressed only by the constants bounding the map above together with $\sigma_l$ and $\sigma_u$ from Assumption~\ref{ass:new:1}. These constants are independent of specific choices of $i$ and $X_0$. 

Now we show that $v(X_0)$ is continuous in $X_0$. At this point we know that for a suitable matrix norm $\|\cdot \|$ (such as the Frobenius norm) and any $X_0 \in \mathbb{S}(n,d, \bar \gamma)$, for  $\epsilon > 0$ there is a positive integer $T(\epsilon)$ such that for $k \geq T(\frac{\epsilon}{3})$, it holds 
\begin{align}
\|P_{\mathbb{S}(n,n)}(\tilde A(k,t_0,X_0)) -  \mathbf{1}_{n,1} v(X_0))\| \leq \frac{\epsilon}{3}.   
\end{align}  
The matrix function $P_{\mathbb{S}(n,n)}(\tilde A(k,t_0,X_0))$ is continuous with respect to $X_0$ for every $k > 0$. Thus, for $\epsilon > 0$ there must be $\delta(\epsilon) > 0$ such that $\|\tilde X_0 - \bar X_0\| \leq \delta$ implies 
\begin{align}
\nonumber
    & \|P_{\mathbb{S}(n,n)}(\tilde A(T(\frac{\epsilon}{3}),t_0,\tilde X_0)) - P_{\mathbb{S}(n,n)}(\tilde A(T(\frac{\epsilon}{3}),t_0,\bar X_0))\| \\
    & \leq \frac{\epsilon}{3}.
\end{align}
Now we use the triangle inequality at the time $k = T(\frac{\epsilon}{3})$:
\begin{equation}
\begin{aligned}
    & \|\mathbf{1}_{n,1} v(\bar X_0) - \mathbf{1}_{n,1} v(\tilde X_0)\|\\
    \leq~& \|\mathbf{1}_{n,1} v(\bar X_0) -  P_{\mathbb{S}(n,n)}(\tilde A(T(\frac{\epsilon}{3}),t_0,\bar X_0))\| \\
    &+ \|P_{\mathbb{S}(n,n)}(\tilde A(T(\frac{\epsilon}{3}),t_0,\bar X_0)) \\
    &- P_{\mathbb{S}(n,n)}(\tilde A(T(\frac{\epsilon}{3}),t_0,\tilde X_0))\| \\
     &+ \|\mathbf{1}_{n,1} v(\tilde X_0) -  P_{\mathbb{S}(n,n)}(\tilde A(T(\frac{\epsilon}{3}),t_0,\tilde X_0))\|\\
    \leq~& \frac{\epsilon}{3} + \frac{\epsilon}{3} + \frac{\epsilon}{3} = \epsilon.
\end{aligned}
\end{equation}
This concludes the proof. 
\hfill \qed

Having defined the continuous function $v(X_0)$, we provide a necessary and sufficient condition for asymptotic consensus of directions.  

\begin{theorem}\label{theorem:main}
Suppose the matrix $A$ together with $\bar \gamma$ satisfies Assumption~\ref{ass:100} and $X(t)$ is given by \eqref{eq:main_Algo_general}.
If and only if $v(X_0)X_0 \neq \bold{0}_{1,d}$, where $v(X_0)$ is the vector from Proposition~\ref{prop_new:3}, 
there is asymptotic consensus of directions. 

Furthermore, if $v(X_0)X_0 \neq \bold{0}_{1,d}$, there is also: 
\begin{enumerate}
    \item point-wise convergence of directions and the directions converge linearly to $\frac{v(X_0)X_0}{\|v(X_0)X_0\|_2}$ for all $i$;
    \item point-wise convergence of states. The sequence $\{X(t)\}$ converges (linearly) to  $P_{\mathbb{S}(n,d, \bar \gamma)}(\boldsymbol{1}_{n,1}(v(X_0)X_0))$ (if each $\gamma_i$ is differentiable at $\frac{v(X_0)X_0}{\|v(X_0)X_0\|_2}$). 
\end{enumerate}
\end{theorem}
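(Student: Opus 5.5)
The plan rests on the representation \eqref{eq:easier_form1}, $X(t_0+k)=\tilde A(k,t_0,X_0)X_0$. Row $i$ of $X(t_0+k)$ is therefore $[\tilde A(k,t_0,X_0)]_iX_0$, and its direction is that of $[\tilde A(k,t_0,X_0)]_iX_0$. By Proposition~\ref{prop_new:3} we can write
\begin{equation*}
[\tilde A(k,t_0,X_0)]_i=c_i(k)\bigl(v(X_0)+r_i(k)\bigr),
\end{equation*}
with $c_i(k)=\|[\tilde A(k,t_0,X_0)]_i\|_2>0$ and $\|r_i(k)\|_2\le C\rho^k$ for some $\rho<1$. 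Then $[X(t_0+k)]_i=c_i(k)\bigl(v(X_0)X_0+r_i(k)X_0\bigr)$, and the positive scalar $c_i(k)$ drops out after normalization.

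\emph{Sufficiency and item 1.} If $v(X_0)X_0\neq\bold 0_{1,d}$, then $w_i(k)=v(X_0)X_0+r_i(k)X_0$ satisfies $\|w_i(k)-v(X_0)X_0\|_2\le \|X_0\|\,C\rho^k$. The map $w\mapsto w/\|w\|_2$ is Lipschitz in a neighbourhood of the nonzero vector $v(X_0)X_0$. Hence every normalized row converges linearly to $v(X_0)X_0/\|v(X_0)X_0\|_2$. This gives point-wise convergence of directions, and in particular asymptotic consensus of directions.

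\emph{Item 2.} Since $[X(t)]_i=\gamma_i(\text{direction}_i)\cdot\text{direction}_i$ and $\gamma_i$ is continuous, $X(t)$ converges to $P_{\mathbb{S}(n,d,\bar\gamma)}(\boldsymbol 1_{n,1}v(X_0)X_0)$. If $\gamma_i$ is differentiable at the limit direction, then $\gamma_i$ is locally Lipschitz-like along the sequence, $|\gamma_i(u_k)-\gamma_i(u)|\le (\|\nabla\gamma_i\|+1)\|u_k-u\|$ for $k$ large, so the linear rate transfers to the states.

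\emph{Necessity.} I expect this to be the main obstacle, because when $v(X_0)X_0=0$ the directions are governed by the vanishing remainders $r_i(k)X_0$, and these are hard to control directly. I would avoid them by proving an invariance property instead. The semigroup identity gives
\begin{equation*}
\tilde A(k+1,t_0,X_0)=\tilde A\bigl(k,t_0+1,X(t_0+1)\bigr)\,D_{\bar\gamma}(AX_0)A .
\end{equation*}
The left side has normalized rows tending to $v(X_0)$. The rows of the right side are positive multiples of vectors tending to $v(X(t_0+1))D_{\bar\gamma}(AX_0)A$. Comparing the two limits yields $v(X_0)=\lambda\,v(X(t_0+1))D_{\bar\gamma}(AX_0)A$ for some $\lambda>0$. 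Multiplying by $X_0$ gives $v(X_0)X_0=\lambda\,v(X(t_0+1))X(t_0+1)$, and by induction $v(X(t))X(t)=0$ for all $t$ whenever it is zero at $t_0$.

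Now suppose asymptotic consensus of directions held. Write $x_i(t)=\|x_i(t)\|_2\bigl(u(t)+e_i(t)\bigr)$, where $u(t)=x_1(t)/\|x_1(t)\|_2$ and $e_i(t)\to0$. Since $v$ is continuous with values in $(\mathbb R^+)^n$ on the compact set $\mathbb{S}(n,d,\bar\gamma)$, its entries are bounded below by some $m>0$ uniformly. The norms satisfy $\|x_i(t)\|_2\ge\min\gamma_i>0$ and are bounded above. Therefore
\begin{equation*}
\bigl\|v(X(t))X(t)\bigr\|_2\;\ge\;\sum_i [v(X(t))]_i\|x_i(t)\|_2-\sum_i[v(X(t))]_i\|x_i(t)\|_2\|e_i(t)\|_2,
\end{equation*}
and the right-hand side is at least $n\,m\min_{i,x}\gamma_i(x)-o(1)>0$ for large $t$. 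This contradicts $v(X(t))X(t)=0$ for all $t$, which completes necessity.
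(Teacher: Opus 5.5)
Your proposal is correct and follows essentially the same route as the paper. Sufficiency and items 1--2 come from the representation $X(t_0+k)=\tilde A(k,t_0,X_0)X_0$ together with Proposition~\ref{prop_new:3}, and necessity comes from combining positivity of $v$ with clustering of the directions at a late time, then transferring the conclusion back to $X_0$ through the product structure of $\tilde A$. Your exact one-step identity $v(X_0)X_0=\lambda\, v(X(t_0+1))X(t_0+1)$ with $\lambda>0$, iterated, plus your quantitative lower bound, is a cleaner version of the paper's half-space argument and its inequality chain through $\tilde A(T-t_0,t_0,X_0)$; the one detail you leave unstated, that $v(X(t_0+1))D_{\bar\gamma}(AX_0)A$ is nonzero, holds because $v$ is positive and $A$ has a positive diagonal.
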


\textbf{Proof:} Let us define $t=t_0+k$ and 
\begin{equation}
    z_i(t) = \frac{[\tilde{A}(k,t_0,X_0)]_i}{\|[\tilde{A} (k,t_0,X_0))]_i\|_2} \text{ and } y_i(t) = z_i(t)X_0,
\end{equation}
for all $i$ and $k > 0$. 
It may be readily concluded that $\frac{y_i(t)}{\|y_i(t)\|_2} = \frac{x_i(t)}{\|x_i(t)\|_2}$ for all $t > t_0$, and $\lim\limits_{t \rightarrow +\infty}y_i(t) = \lim\limits_{t \rightarrow +\infty}z_i(t)X_0 = y_{\infty} = v(X_0)X_0$. 

We first show sufficiency. Suppose $y_{\infty} = v(X_0)X_0 \neq \bold{0}_{1,d}$. Then $\lim\limits_{t \rightarrow +\infty}\frac{x_i(t)}{\|x_i(t)\|_2} = \lim\limits_{t \rightarrow +\infty}\frac{y_i(t)}{\|y_i(t)\|_2} = \frac{y_{\infty}}{\|y_{\infty}\|_2} =\frac{v(X_0)X_0}{\|v(X_0)X_0\|_2}$ for all $i$. Furthermore, continuity of the $\gamma_i$'s ensures  point-wise convergence of the states. If the sequence $\{z_i(t)\}$ converges linearly, so does the sequence $\{y_i(t)\}$. For large enough $t$ there exists $\delta \in (0,1)$ such that $\|y_i(t+1) - y_{\infty}\|_2 \leq \delta\|y_i(t) - y_{\infty}\|_2$.
The Taylor expansion of $g(y) = \frac{y}{\|y\|_2}$ at $y_{\infty}$ is
\begin{align}
    g(y_{\infty} + e) - g(y_{\infty}) = \frac{e}{\|y_{\infty}\|_2}\left(I_d - \frac{y_{\infty}^Ty_{\infty}}{\|y_{\infty}\|^2_2}\right ) + O(\|e\|_2^2).  
\end{align}
So, for large enough $t$, the norm $\|g(y(t)) - g(y_{\infty})\|_2$ is bounded by scaling of $\|y(t) - y_{\infty}\|_2$. 
An analogous argument can be made about linear convergence of states if each $\gamma_i$ is differentiable at $\frac{v(X_0)X_0}{\|v(X_0)X_0\|_2}$, since $x_i(t) = \gamma_i\left(\frac{y_i(t)}{\|y_i(t)\|_2}\right)\frac{y_i(t)}{\|y_i(t)\|_2}$ for all $i$. 

We now establish necessity. Suppose for an $X_0$, there is asymptotic consensus of directions so that \eqref{eq:5} is satisfied. But then there must be a time $T \geq t_0+n$ and a unit vector $h \in \mathbb{R}^{d}$ (as function of $X_0$) such that $ X(T)h^T > \bold{0}_{n,1}$. In other words, there must be a time $T$ so that all the directions are inside the interior of a half-sphere. Since $v(X(T))$ is positive, it holds that $v(X(T))X(T)h^T > 0$. Clearly $v(X(T))X(T) \neq \bold{0}_{1,d}$. We also know from previous results that $\tilde A(T-t_0, t_0, X_0)$ is a positive matrix for $T-t_0 \geq n$. Thus there must be $\bar j$ such that for any $\bar i$,
\begin{equation}
\begin{aligned}
0 < ~&\frac{|[v(X(T))X(T)]_{\bar j}|}{\|\tilde A(T-t_0, t_0, X_0)\|_{\text{F}}} \\
=~& \lim_{k \rightarrow \infty}   \frac{|[[\tilde A(k, T, X(T))]_{\bar i}\tilde A(T-t_0, t_0, X_0) X_0]_{\bar j}|}{\|[\tilde A(k, T, X(T))]_{\bar{i}}\|_2\|\tilde A(T-t_0, t_0, X_0)\|_{\text{F}}} \\
\leq~& \lim_{k \rightarrow \infty}   \frac{|[[\tilde A(k, T, X(T))\tilde A(T-t_0, t_0, X_0)]_{\bar i}X_0]_{\bar j}|}{\|[\tilde A(k, T, X(T))\tilde A(T-t_0, t_0, X_0)]_{\bar i}\|_2} \\
=~& |[v(X_0)X_0]_{\bar j}| \leq \|v(X_0)X_0\|_1, 
\end{aligned}
\end{equation}
where $\|\cdot \|_{\text{F}}$ denotes the Frobenius norm. 
This means that if there is asymptotic consensus of directions, then $v(X_0)X_0 \neq \bold{0}_{1,d}$.  $\hfill \qed$

In Theorem~\ref{theorem:main} it is stated that if asymptotic consensus of directions occurs, the convergence is linear and point-wise. So asymptotic consensus of directions without point-wise and linear order of convergence does not exist. 
Due to Theorem~\ref{theorem:main}, the following corollary is immediate. 
\begin{corollary}
Suppose the matrix $A$ together with $\bar \gamma$ satisfies Assumption~\ref{ass:100} and $X(t)$ is given by \eqref{eq:main_Algo_general}.
\begin{enumerate}
    \item The set of $X_0$'s for which there is asymptotic consensus of directions with point-wise convergence is an open set.
    \item The consensus direction (state) is locally continuous in $X_0$. 
    \end{enumerate}
\end{corollary}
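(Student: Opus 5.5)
The plan is to derive both items directly from Theorem~\ref{theorem:main} together with the continuity of $v$ established in Proposition~\ref{prop_new:3}.

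Define the map $F:\mathbb{S}(n,d,\bar\gamma)\to\mathbb{R}^{1\times d}$ by $F(X_0)=v(X_0)X_0$. It is continuous, since it is the product of the continuous map $X_0\mapsto v(X_0)$ (Proposition~\ref{prop_new:3}) and the identity map.

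By Theorem~\ref{theorem:main}, asymptotic consensus of directions occurs if and only if $F(X_0)\neq\bold{0}_{1,d}$. Whenever it occurs, it comes with point-wise convergence. Hence the set of initial states yielding asymptotic consensus of directions with point-wise convergence is exactly $F^{-1}(\mathbb{R}^{1\times d}\setminus\{\bold{0}_{1,d}\})$. This is the preimage of an open set under a continuous map, so it is open in $\mathbb{S}(n,d,\bar\gamma)$ with the subspace topology. This proves item 1. I note that ``open'' is meant relative to $\mathbb{S}(n,d,\bar\gamma)$, since that is the state space.

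For item 2, fix $\bar X_0$ in this open set. On a neighborhood $U$ of $\bar X_0$ contained in the set, item 1 of Theorem~\ref{theorem:main} identifies the consensus direction as $F(X_0)/\|F(X_0)\|_2$. This expression is continuous on $U$, because normalization is continuous away from zero. By item 2 of Theorem~\ref{theorem:main}, the limiting state is $P_{\mathbb{S}(n,d,\bar\gamma)}(\bold{1}_{n,1}F(X_0))$. Its $i$-th row is $\gamma_i\big(F(X_0)/\|F(X_0)\|_2\big)\,F(X_0)/\|F(X_0)\|_2$, a composition of continuous maps, since each $\gamma_i$ is continuous. So the consensus state is continuous on $U$.

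There is no genuine obstacle. The only subtlety is that the convergence of states in Theorem~\ref{theorem:main} does not depend on the differentiability of $\gamma_i$: differentiability is needed only for the linear rate. Point-wise convergence of states follows from continuity of the $\gamma_i$ alone, as argued in that proof.
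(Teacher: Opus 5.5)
Your proposal is correct and follows the paper's intended argument. The paper calls this corollary immediate from Theorem~\ref{theorem:main}, and the reasoning behind it is the one you give: $X_0 \mapsto v(X_0)X_0$ is continuous by Proposition~\ref{prop_new:3}, so its nonvanishing set is open relative to $\mathbb{S}(n,d,\bar\gamma)$, and the normalized limit and its row-wise $\gamma_i$-scaling are continuous on that set.
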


We now provide a corollary with sufficient conditions for $v(X_0)X_0 \neq \bold{0}_{1,d}$ to be satisfied. 

\begin{corollary}\label{corollary:x}
Suppose the matrix $A$ together with $\bar \gamma$ satisfies Assumption~\ref{ass:100} and $X(t)$ is given by \eqref{eq:main_Algo_general}.
    Under each of the following conditions, $v(X_0)X_0 \neq \bold{0}_{1,d}$ is satisfied, whereby asymptotic consensus of directions with point-wise convergence is assured by Theorem~\ref{theorem:main}.

\begin{enumerate}
    \item The rows of $X_0\in \mathbb{S}(n,d, \bar \gamma)$ are in a half-space in the sense that there is a unit vector $h \in \mathbb{R}^d$ such that $\bold{0}_{n,1} \neq X_0h^T \geq \bold{0}_{n,1}$.
    \item The rank of $X_0 \in \mathbb{S}(n,d, \bar \gamma)$ is $n$, which is the case for all but a measure zero set of $X_0$'s if $d \geq n$. 
    \item If there is a nonzero column $c$ of $X_0\in \mathbb{S}(n,d, \bar \gamma)$ such that $c^T \not\in \mathcal{C}^{\perp}(A)$, where \begin{align}
     \mathcal{C}^{\perp}(A)\!=\!\{x \in  \mathbb{R}^n\!: \exists y \in  \mathcal{C}(A), \text{ s.t. } y \neq \bold{0}_{1,n},  yx^T = 0 \},   
    \end{align}
where $\mathcal{C}(A)$ is the minium cone for $A$. 
\end{enumerate}
\end{corollary}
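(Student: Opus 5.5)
The plan is to first establish two structural facts about the vector $v(X_0)$ from Proposition~\ref{prop_new:3}, and then verify each of the three conditions by a short computation.

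\emph{Fact 1: $v(X_0)$ is strictly positive.} In the proof of Proposition~\ref{prop:convergence_Atilde}, applied to $A(m)=D_{\bar\gamma}(AX(t_0+m))A$ as in the proof of Proposition~\ref{prop_new:3}, the rows of $Y(k,t_0)$ for $k\ge n$ stay in the minimum cone $\mathcal{C}(Y(n,t_0))$, because of the cone invariance. The matrix $Y(n,t_0)$ is positive, and by \eqref{eq:lemma1_ineq_Atilda2} together with Proposition~\ref{corollary:1} its entries are bounded below by a positive constant. Every unit vector in the closed cone generated by such rows therefore has entries bounded away from zero. Since $v(X_0)$ is the limit of unit vectors in this closed cone, it is a positive vector.

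\emph{Fact 2: $v(X_0)\in\mathcal{C}(A)\setminus\{\bold{0}_{1,n}\}$.} We can write $\tilde A(k,t_0,X_0)=M_k A$, where $M_k$ is the nonnegative product of the remaining factors (with $M_1=D_{\bar\gamma}(AX_0)$). Hence every row of $\tilde A(k,t_0,X_0)$ is a conical combination of the rows of $A$, i.e., it lies in $\mathcal{C}(A)$. This cone is closed, being finitely generated, and it is invariant under positive scaling. So the normalized rows also lie in $\mathcal{C}(A)$, and so does their limit $v(X_0)$, which is a unit vector and hence nonzero.

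With these facts the three cases are as follows.
\begin{enumerate}
\item Let $h$ be the vector in condition 1. Then $v(X_0)X_0h^T=\sum_i [v(X_0)]_i\,[X_0h^T]_i$. Every term is nonnegative, and at least one is strictly positive because $v(X_0)>0$ and $X_0h^T\neq\bold{0}_{n,1}$. Hence $v(X_0)X_0\neq\bold{0}_{1,d}$.
\item If $\operatorname{rank}X_0=n$, the rows of $X_0$ are linearly independent. Since $v(X_0)\neq 0$, the combination $v(X_0)X_0$ is nonzero.
\item Suppose, for contradiction, that $v(X_0)X_0=\bold{0}_{1,d}$. Then the entry corresponding to the column $c$ satisfies $v(X_0)c=0$. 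Since $v(X_0)$ is a nonzero element of $\mathcal{C}(A)$ by Fact~2, this gives $c^T\in\mathcal{C}^\perp(A)$, contradicting the hypothesis.
\end{enumerate}

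For the measure-zero claim in condition 2, note that linear dependence of the rows of $X_0$ depends only on their directions $x_i/\|x_i\|_2$, since the radial factors $\gamma_i>0$ do not affect it. The map $x\mapsto\gamma_i(x)x$ is a homeomorphism from $\mathbb{S}^{d-1}$ onto $\mathbb{S}^{d-1}_{\gamma_i}$, and the measure on $\mathbb{S}(n,d,\bar\gamma)$ is understood as the pull-back through this parametrization. It therefore suffices to show that, for $d\ge n$, the tuples in $(\mathbb{S}^{d-1})^n$ that are linearly dependent form a null set. I would argue this via Fubini. For fixed $x_1,\dots,x_{j-1}$, the set of $x_j\in\mathbb{S}^{d-1}$ lying in their span is a great subsphere of dimension at most $j-2<d-1$, which is null. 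Dependence of the tuple means some $x_j$ lies in the span of $x_1,\dots,x_{j-1}$, so the dependent set is a finite union of null sets.

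The main obstacle is Fact~1, namely uniform positivity of the limit $v(X_0)$ rather than mere nonnegativity. The plan relies on the uniform entrywise lower bounds from Proposition~\ref{corollary:1} and \eqref{eq:lemma1_ineq_Atilda2} passing to the limit. If that turns out to be delicate, a fallback for condition 1 is to use only $v(X_0)\ge 0$ together with the fact that $v(X_0)$ is a limit of rows of $Y(n,t_0)$-cone elements. This still gives positivity, because the cone generated by positive rows is contained in the open positive orthant.
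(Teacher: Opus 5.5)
Your proposal is correct and follows the paper's proof: positivity of $v(X_0)$ for condition 1, linear independence of the rows of $X_0$ for condition 2, and the contrapositive using $v(X_0)$ as a nonzero element of $\mathcal{C}(A)$ for condition 3. The difference is that you also justify three points the paper takes for granted, and your justifications are sound. These are strict positivity of $v(X_0)$ through the cone $\mathcal{C}(Y(n,t_0))$, membership $v(X_0)\in\mathcal{C}(A)$ through closedness of that finitely generated cone, and the Fubini argument for the measure-zero claim.
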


\textbf{Proof:}\\
(1) Since $v(X_0)$ is a positive vector, $v(X_0)X_0h^T \neq 0$, which implies $v(X_0)X_0 \neq \bold{0}_{1,d}$. 

(2) If the rank of $X_0$ is $n$, obviously $v(X_0)X_0 \neq \bold{0}_{1,d}$. If $d \geq n$, the set of $X_0$'s with linearly dependent rows has measure zero. 

(3) If there is a column $c$ of $X_0$ such that $c^T$ is not in $\mathcal{C}^{\perp}(A)$, then since 
$v(X_0) \in \mathcal{C}(A)$, it holds that $v(X_0)c \neq 0$. 
$\hfill \qed$

Let us illustrate the necessary condition for consensus, i.e., $v(X_0)X_0 \neq \bold{0}_{1,d}$, by the following example. 
Let us consider the case when $X_0$ is such that there is no $h$ so that condition (1) in Corollary~\ref{corollary:x} is fulfilled, but there is an $h$ such that $\bold{0}_{n,1} = X_0h^T$. 

For such cases, we cannot guarantee $v(X_0)X_0 \neq \bold{0}_{1,d}$. 
In the plot on the left in Fig.~\ref{fig:bad_points_nd_22}, this is illustrated for the unit circle where $n = d = 2$.  Here $[X_0]_1 = -[X_0]_2$ meaning $[X_0]_1$ and $[X_0]_2$ have antipodal positions on the unit circle and are placed on a line that passes through the origin. 
Such a configuration comprises
a fixed point for Algorithm~\eqref{eq:main_Algo_general} (more conveniently be described by Algorithm~\eqref{eq:main_Algo_Lp} in this case).
One can verify that this fixed point is unstable.  In the right plot in Figure~\ref{fig:bad_points_nd_22}, the vector $v(X_0)$ is depicted by a blue arrow, whereas the two columns of $X_0$ are represented by black dots. Clearly $v(X_0)$ is orthogonal to the two columns, so asymptotic consensus of directions or states does not occur. 

\begin{figure}[h] 
    \centering
\includegraphics[width=0.9\columnwidth]{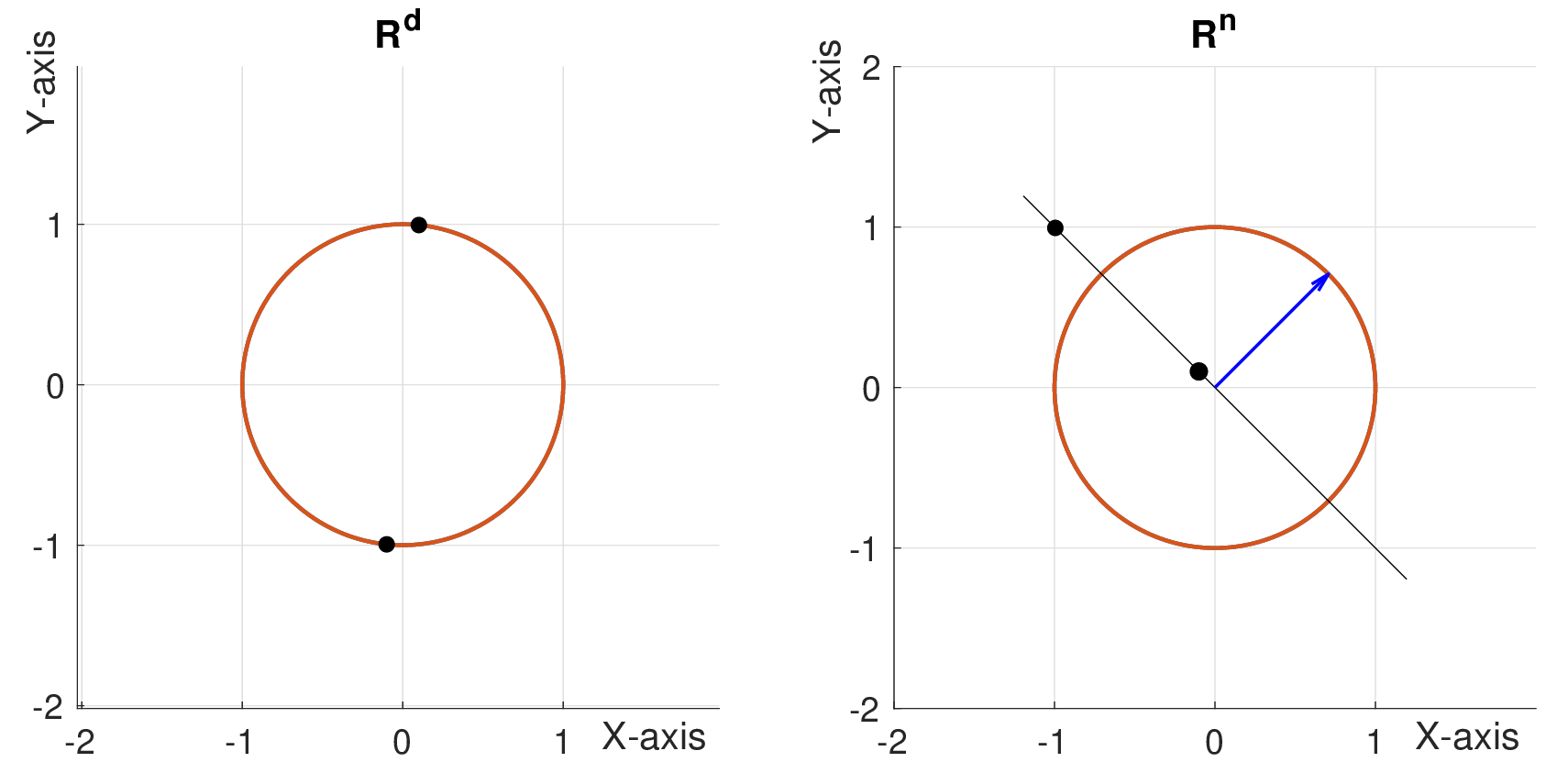} 
    \caption{Left: antipodal position of the rows of $X_0$ on the unit circle in $\mathbb{R}^d$. Right: the two columns of $X_0$ in $\mathbb{R}^n$ (black dots) and the vector $v(X_0)$ (blue arrow). Clearly $v(X_0)X_0 = \bold{0}_{1,d}$.}
    \label{fig:bad_points_nd_22}
\end{figure}

The plot on the right in Fig.~\ref{fig:bad_points_nd_22} illustrates that the condition $(1)$ in Corollary~\ref{corollary:x} can be equivalently be seen as a condition on the rows of $X_0$. 
If there exists an orthogonal matrix $R$ such that $X_0R$ has a column with all nonnegative entries and at least one entry that is strictly positive, then $v(X_0)X_0 \neq \bold{0}_{1,d}$. 

Condition $(3)$ in Corollary~\ref{corollary:x} is illustrated by Fig.\ref{fig:shrinking_cones_2d}, where the evolution of the $\mathcal{C}(P_{\mathbb{S}(n,n)}(\tilde A(k,t_0,X_0)))$-cones is shown. The initial cone $\mathcal{C}(A)$ corresponds to bright red and this cone contains all subsequent $\mathcal{C}(P_{\mathbb{S}(n,n)}(\tilde A(k,t_0,X_0)))$-cones. As $k$ increases, the darkness of the red color increases. Note that while the cones extend infinitely, the visualization is limited to the portion of the cone that intersects the unit disc. 
The set $\mathcal{C}^{\perp}(A)$ is illustrated by the cropped  purple region. 
Since the cones formed by the rows of $P_{\mathbb{S}(n,n)}(\tilde A(k,t_0,X_0))$ shrink with time, the corresponding geodesic hull also gradually shrinks to a single point. Fig. \ref{fig:shrinking_cones_3d} on the right shows an example of how the geodesic hull shrinks with time on $\mathbb{S}^2$. On the left, the convergence of the points representing the normalized rows $P_{\mathbb{S}(n,n)}(\tilde A(k,t_0,X_0))$ on the unit sphere is shown. As time progresses, these points move closer together, ultimately converging to a single point, which corresponds to the vector $v(X_0)$.

\begin{figure}[h]
    \centering
\includegraphics[width=0.65\columnwidth]{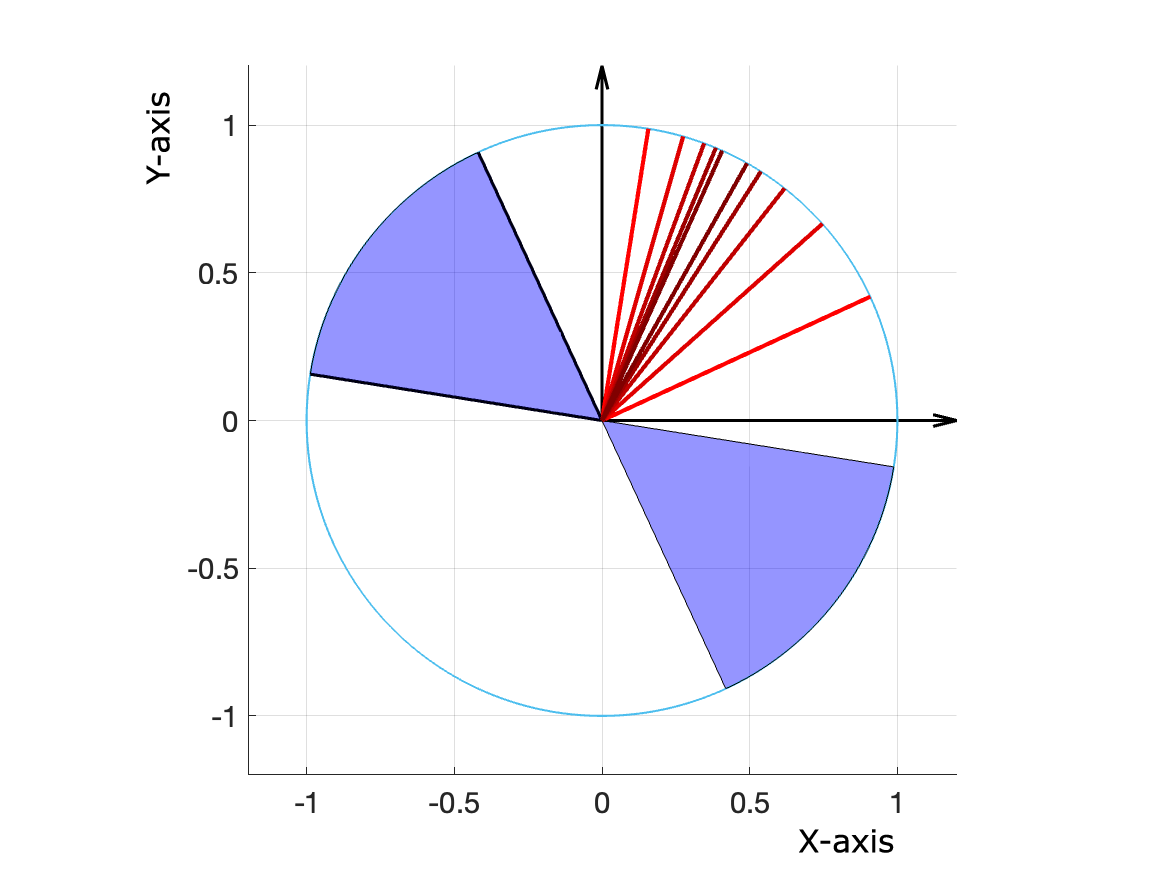} 
    \caption{The evolution of the minimum cone $\mathcal{C}(\tilde A(k,t_0,X_0))$ in $\mathbb{R}^2$. The first $5$ time steps are presented, with darker colors corresponding to later time steps. The purple region represents  $\mathcal{C}^{\perp}(A)$. Here, $d=n=2$.}
    \label{fig:shrinking_cones_2d}
\end{figure}

\begin{figure}[h] 
    \centering
\includegraphics[width=0.95\columnwidth]{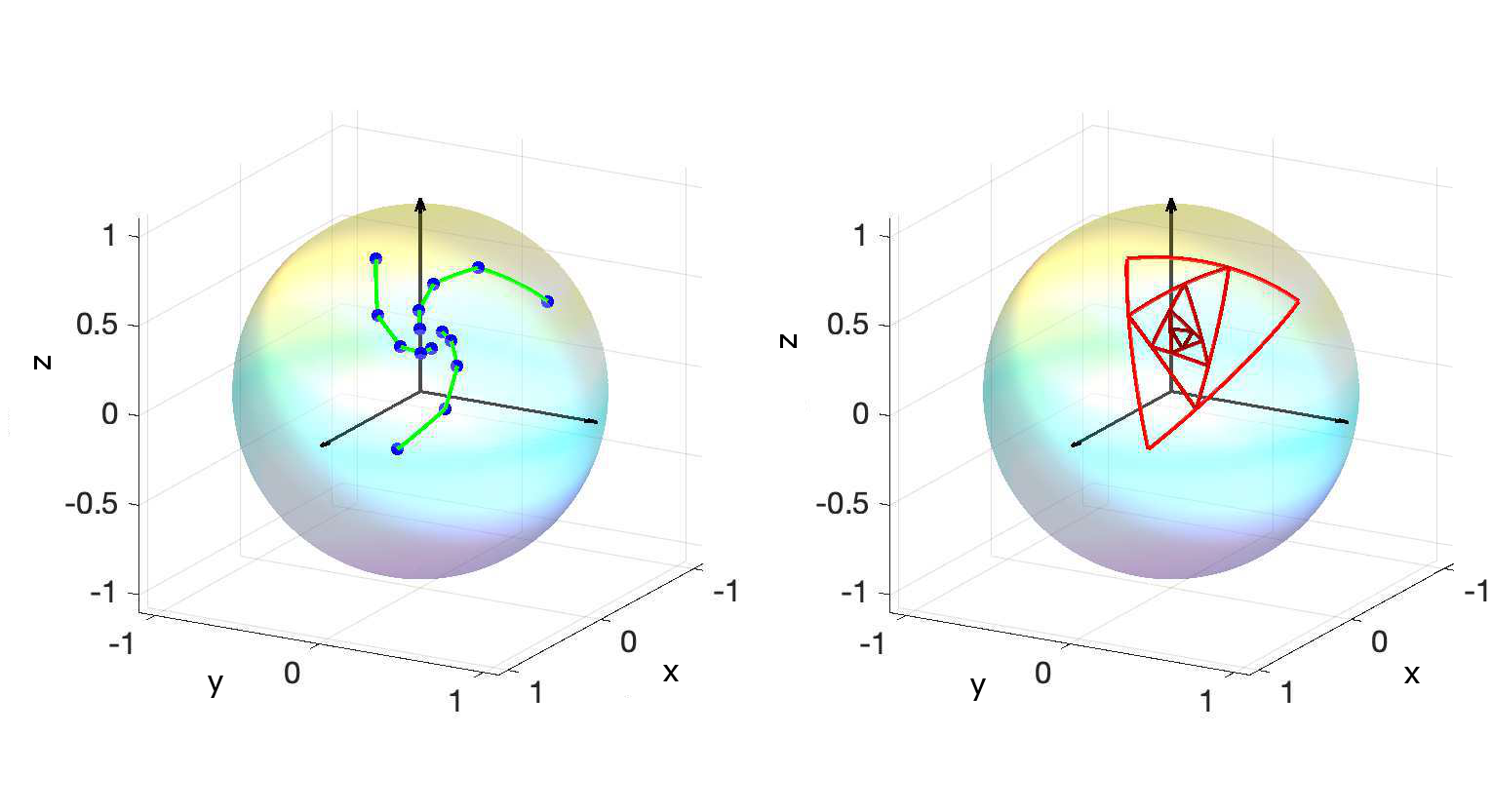} 
    \caption{Left: The convergence of the points representing the normalized rows $P_{\mathbb{S}(n,n)}(\tilde A(k,t_0,X_0))$ on the unit sphere in $\mathbb{R}^3$. Right: The corresponding geodesic hulls are shrinking with time. The first $5$ time steps are presented, with darker colors corresponding to later time steps. Here, $d=n=3$.}
    \label{fig:shrinking_cones_3d}
\end{figure}

\section{Simulations}\label{sec:simulations}
This section presents various examples of asymptotic consensus of directions or states for Algorithm \eqref{eq:main_Algo_general}. We conducted numerous simulations that numerically support the results presented in Section \ref{sec:convergence_to_consensus}. 
Corollary~\ref{corollary:x} provides sufficient conditions for the $v(X_0)X_0 \neq \bold{0}_{1,d}$ in Theorem~\ref{theorem:main}. However, numerical simulations for random system parameters seem to suggest a conjecture that the condition $v(X_0)X_0 \neq \bold{0}_{1,d}$ holds for all $X_0$ but a set of measure zero. This conclusion is based on $10^8$ simulations carried out across various randomly selected configurations, including different space dimensions $d$, numbers of agents $n$, matrix structures \(A\), and non-trivial star boundaries including $l_p$-spheres, where asymptotic consensus of directions was observed in every experiment. 

In each simulation, $n$ and $d$ were selected randomly. For each simulation a strongly connected graph $\mathcal{G}(n)$ was selected randomly together with an $A$-matrix satisfying Assumption~\ref{ass:100}. To construct non-trivial star-convex shapes, $K$ random unit vectors in $\mathbb{R}^d$ were generated, with the number of vectors $K$ chosen uniformly from $d$ to $4d$. Each vector was then scaled by a random positive factor. The distance to the boundary in any given direction $h \in \mathbb{R}^d$ was then computed as the sum of some minimum distance and the nonnegative scalar products between the direction vector $h$ and each of the $K$ unit vectors, raised to a varying integer power sampled uniformly from $1$ to $10$. The initial $X_0$'s were generated by element-wise sampling from the standard normal distribution followed by projection onto the star boundaries.

Some examples of the simulations are presented by Figures \ref{fig:lp_spheres_r2}-\ref{fig:star_boundary}. For visualization purposes, the space dimension $d$ was chosen to be $2$ or $3$. Figures \ref{fig:lp_spheres_r2},\ref{fig:lp_spheres_r3} illustrate various $l_p$-spheres with different values of the norm parameter $p$ and radius $r$. In Fig.~\ref{fig:linear_rate}, linear convergence is shown for the configuration in Fig. \ref{fig:lp_spheres_r3}, left.
An example of convergence to consensus on a more complex star boundary is presented in Fig.\ref{fig:star_boundary}. 
In Fig.\ref{fig:lp_spheres_r2} and Fig.\ref{fig:star_boundary}, the initial positions of the agents, $x_i$-states, are presented on the left while the final positions are shown on the right, along with consensus direction. Fig.\ref{fig:lp_spheres_r3} illustrates both the initial and final positions on a single figure, along with the trajectories of each agent from their initial to final positions. In all the presented examples, $n>d$, and for all cases except Fig.\ref{fig:lp_spheres_r3}, left, the initial matrix $X_0$ does not belong to a half-space. 

\begin{figure}[h]
    \centering
\includegraphics[width=0.9\columnwidth]{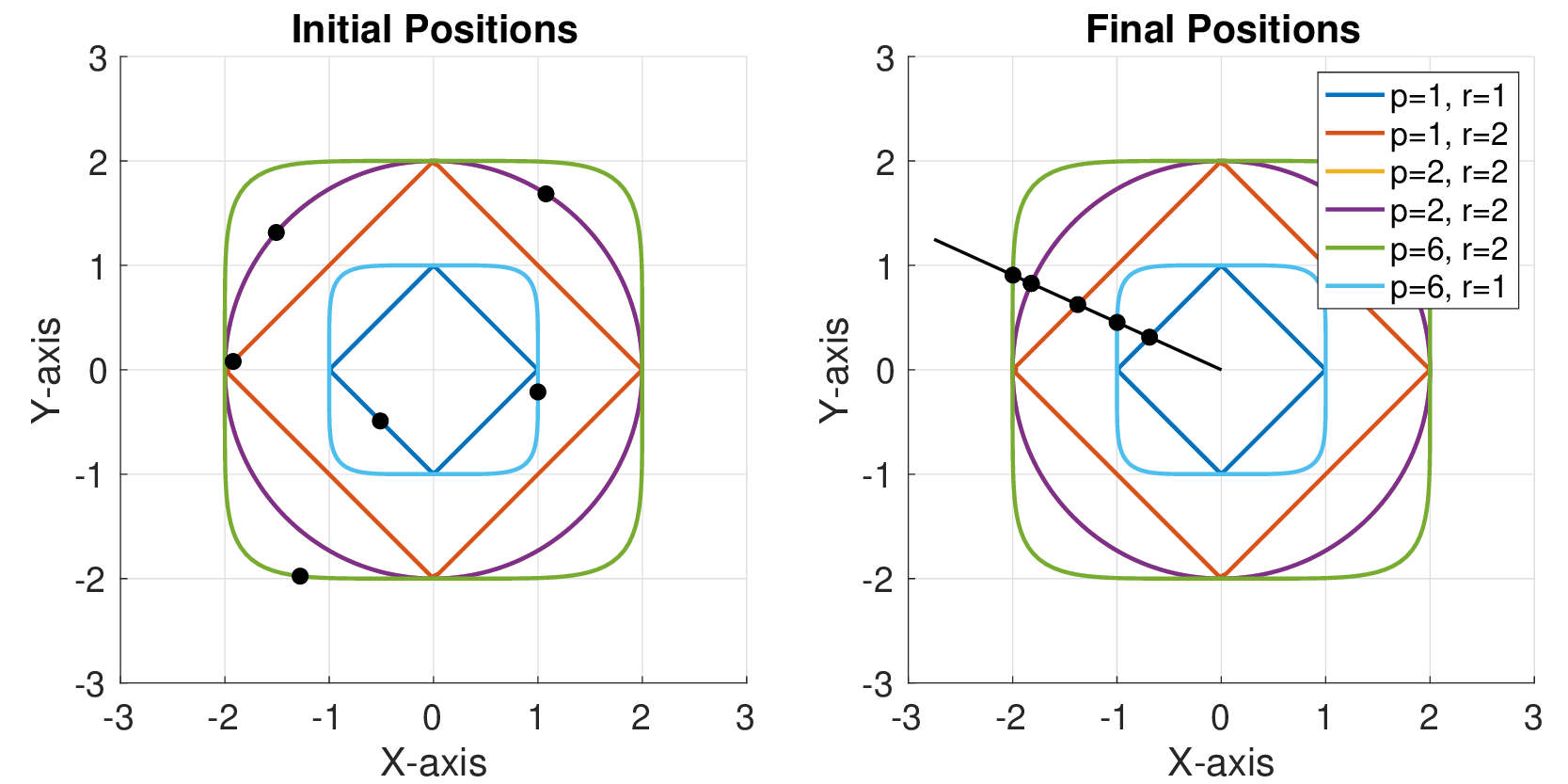}
    \caption{Convergence to the consensus direction of agents moving on various $l_p$-spheres in $\mathbb{R}^2$. The chosen radius and parameter $p$ for the spheres are indicated in the legend. Left: initial positions of the agents. Right: final positions of the agents, which have converged to the consensus direction. The line segment connecting the origin and final positions of the agents illustrates the consensus direction.}
     \label{fig:lp_spheres_r2}
\end{figure}

\begin{figure}[h]
    \centering
\includegraphics[width=0.9\columnwidth]{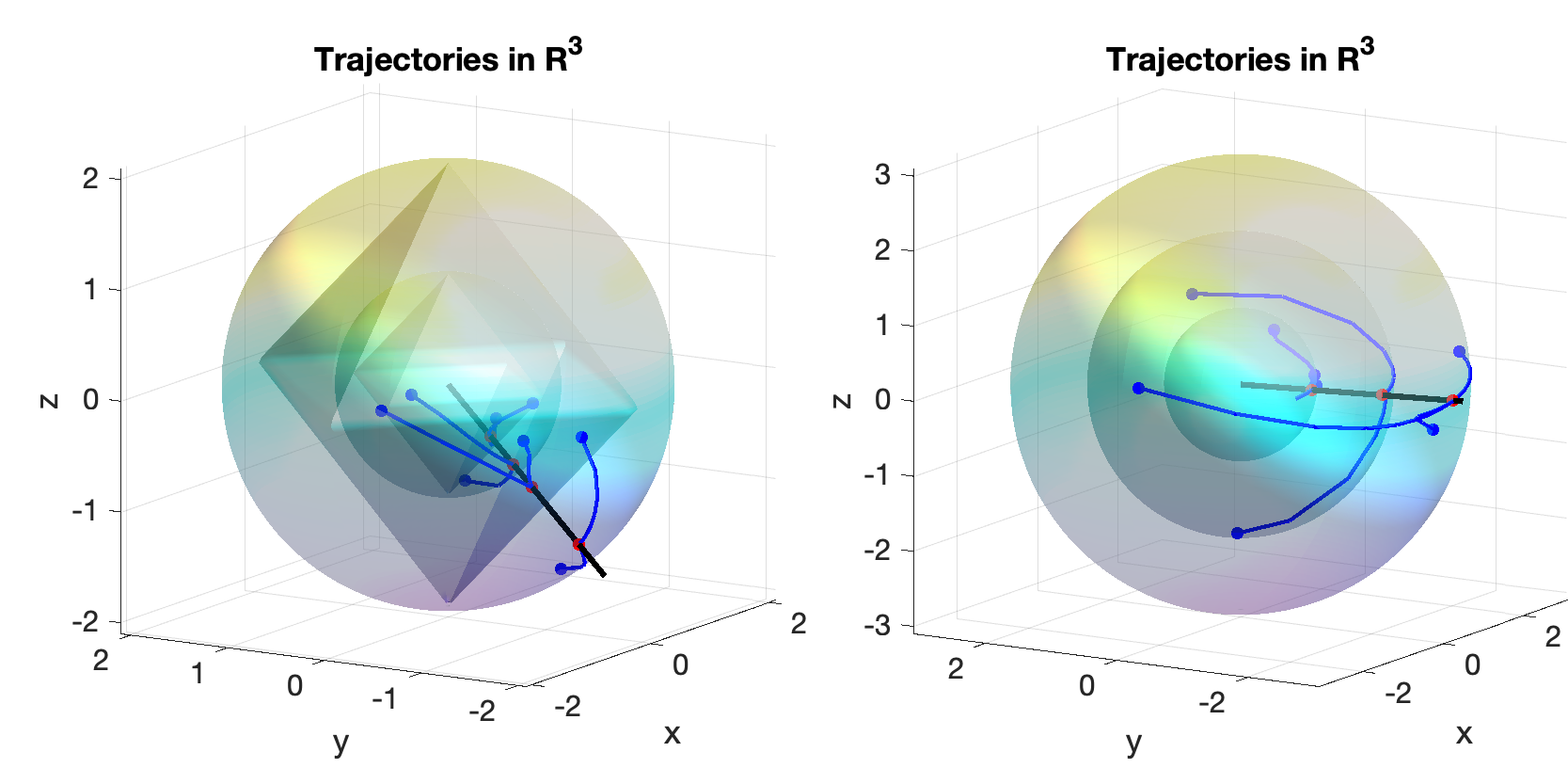}   
\caption{Convergence to the consensus direction of agents moving on various $l_p$-spheres in $\mathbb{R}^3$, where on the left: $p=\{1,2\}$, $r=\{1,2\}$; right: $p=2$, $r=\{1,2,3\}$. 
Blue dots represent the initial positions of the agents, whereas red dots -- the final positions. The blue lines trace the trajectories of each agent over time. The line segment connecting the origin and the final positions of the agents illustrates the consensus direction.}
    \label{fig:lp_spheres_r3}
\end{figure}

\begin{figure}[h] 
    \centering
\includegraphics[width=0.75\columnwidth]{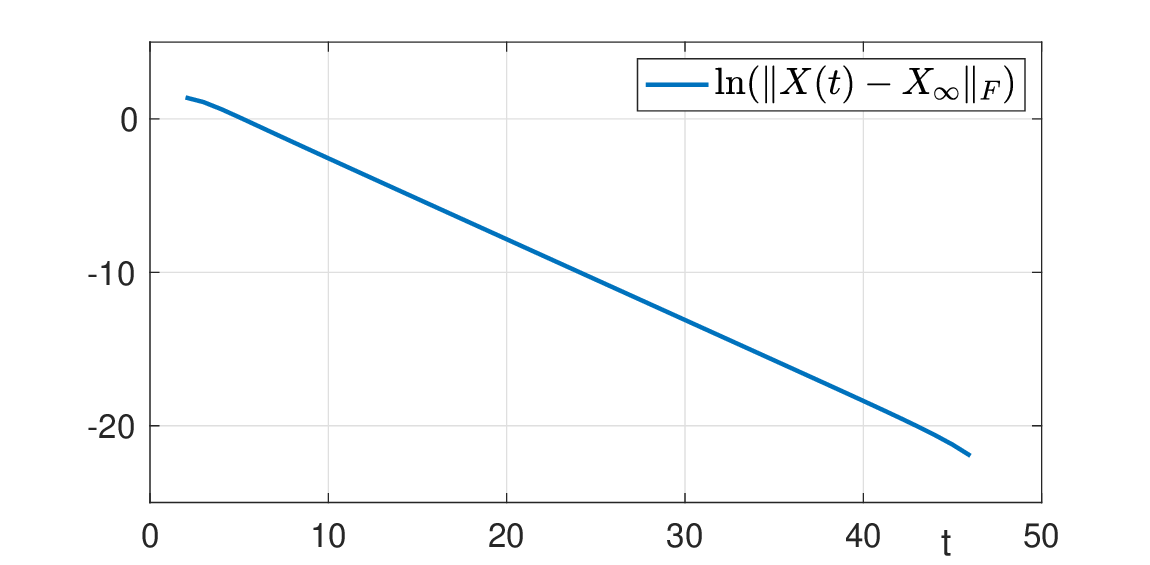} 
    \caption{Linear convergence is shown for the configuration in Fig. \ref{fig:lp_spheres_r3}, left. 
    }
    \label{fig:linear_rate}
\end{figure}

\begin{figure}[h]
    \centering
\includegraphics[width=0.9\columnwidth]{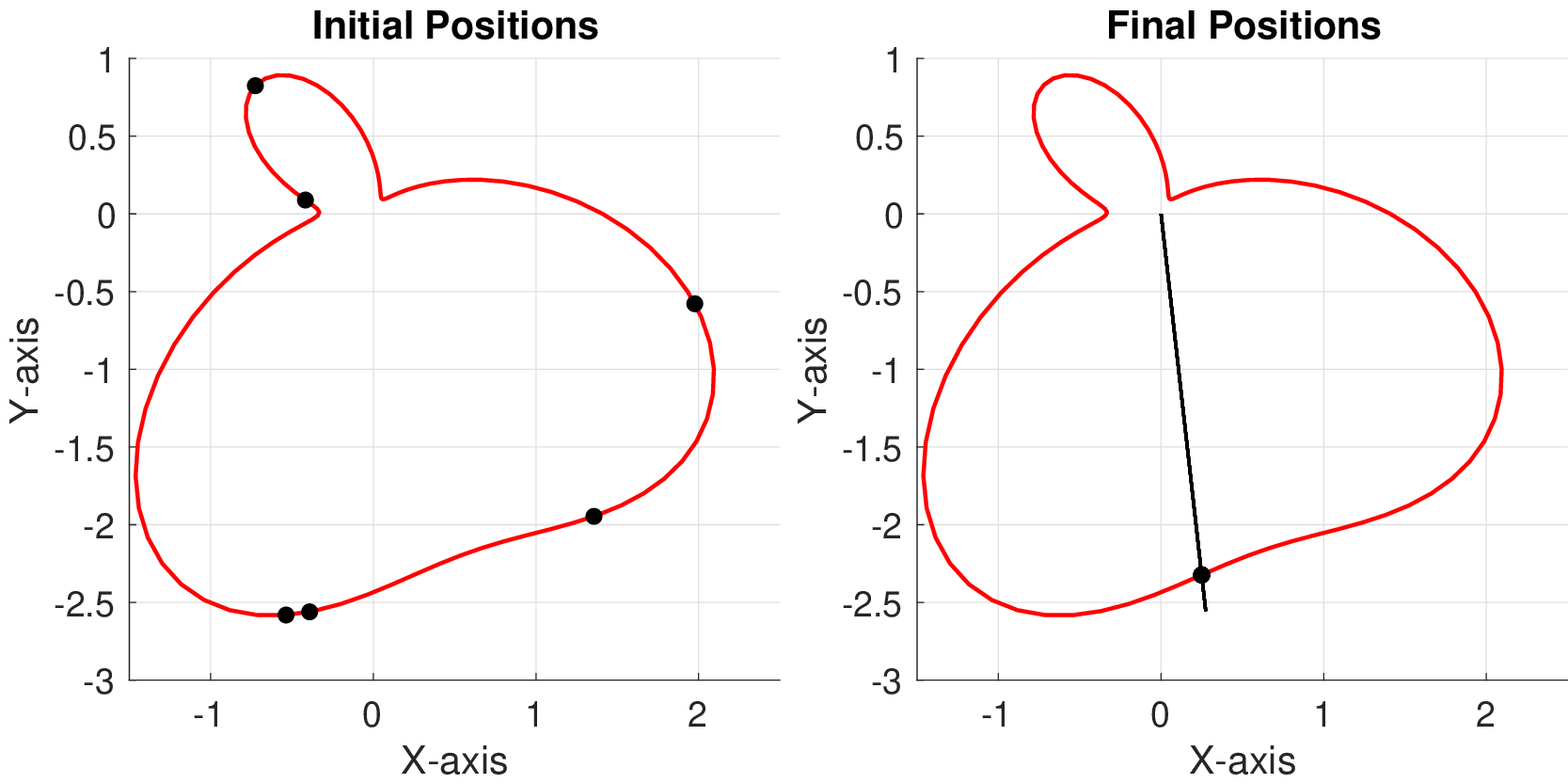} 
    \caption{Convergence to consensus on a star boundary $\mathbb{S}(6,2, \bar \gamma)$ in $\mathbb{R}^2$. Left: initial positions of agents. Right: final position of agents, which have converged to the same point within $\mathbb{S}(6,2, \bar \gamma)$. The depicted line segment connects the origin and the consensus point.}
    \label{fig:star_boundary}
\end{figure}

\section {Conclusions} \label{sec:conclusions}
The focus of this paper is on discrete-time consensus algorithms for multi-agent systems where the state of each agent resides on the boundary of a star-convex set (star boundary). A special case occurs, for example, when all the agent-specific sets are the same and equal to the unit Euclidean sphere. 
The state update map at each discrete time-step comprises a radial projection of a conical or convex combination of neighbors' states onto the agent specific star boundary. The neighborhoods are defined by a directed strongly connected graph. 
A necessary and sufficient condition for asymptotic consensus of directions (normalized states) is provided. Furthermore, if asymptotic consensus of directions occurs, the directions converge linearly. A collection of sufficient conditions under which this condition is satisfied is also presented.

\begin{ack}                              
This work was partially supported by the Wallenberg AI, Autonomous Systems and Software
Program (WASP) funded by the Knut and Alice Wallenberg Foundation and partly supported by the Swedish Research
Council under Grant 2019-04769. 
\end{ack}

\bibliographystyle{elsarticle-harv}      
\bibliography{refs}            

\end{document}